\newcommand{\de}{\partial}
\newcommand{\dbar}{\overline{\partial}}
\newcommand{\ddt}{\frac{\partial}{\partial t}}
\newcommand{\ddbar}{\sqrt{-1} \partial \overline{\partial}}
\newcommand{\Ric}{\mathrm{Ric}}
\newcommand{\ov}[1]{\overline{#1}}
\newcommand{\tr}[2]{\mathrm{tr}_{#1}{#2}}
\newcommand{\vp}{\varphi}
\newcommand{\PSH}{\mathrm{PSH}}
\newcommand{\SRF}{\mathrm{SRF}}
\newcommand{\om}{\omega}
\newcommand{\Om}{\Omega}
\renewcommand{\bar}{\overline}
\newcommand{\tta}{\widetilde}
\newcommand{\Kod}{\textrm{Kod}}
\renewcommand{\leq}{\leqslant}
\renewcommand{\geq}{\geqslant}
\renewcommand{\le}{\leqslant}
\renewcommand{\ge}{\geqslant}
\numberwithin{equation}{section}
\begin{document}
\newtheorem{claim}{Claim}
\newtheorem{theorem}{Theorem}[section]
\newtheorem{lemma}[theorem]{Lemma}
\newtheorem{corollary}[theorem]{Corollary}
\newtheorem{proposition}[theorem]{Proposition}
\newtheorem{question}[theorem]{Question}
\newtheorem{conjecture}[theorem]{Conjecture}
\newtheorem{problem}[theorem]{Problem}

\theoremstyle{definition}
\newtheorem{remark}[theorem]{Remark}
\newtheorem{example}[theorem]{Example}

\title[Convergence of scalar curvature]{Convergence of scalar curvature of K\"ahler-Ricci flow on manifolds of positive Kodaira dimension}

\author{Wangjian Jian}

\address{School of Mathematical Sciences, Peking University, Yiheyuan Road 5,Beijing, P.R.China, 100871}

\email{1401110008@pku.edu.cn}

\pagestyle{headings}

\begin{abstract}
In this paper, we consider K\"ahler-Ricci flow on n-dimensional K\"ahler manifold with semi-ample canonical line bundle and $0<m:=\Kod(X)<n$.  Such manifolds admit a Calabi-Yau fibration over its canonical model.  We prove that the scalar curvature of the K\"ahler metrics along the normalized K\"ahler-Ricci flow converge to $-m$ outside the singular set of this fibration.
\end{abstract}

\maketitle

\section{Introduction}
\label{section:introduction}

Let us first recall the set up of Song-Tian \cite{ST, ST2, ST3} where our result will apply.  Let $(X^n,\omega_0)$ be a compact K\"ahler manifold with canonical line bundle $K_X$ being semi-ample and $0<m:=\Kod(X)<n$.  Therefore the canonical ring $R(X,K_X)$ is finitely generated, and so the pluricanonical system $|\ell K_X|$ for sufficiently large $\ell \in \mathbb{Z^+}$ induces a holomorphic map
\begin{equation}\label{canonical map}
f:X \to B \subset \mathbb{C}\mathbb{P}^N:=\mathbb{P}H^0(X,K_X^{\otimes \ell}),
\end{equation}
where $B$ is the canonical model of $X$.  We have $dimB=m$.

Let $S'$ be the singular set of $B$ together with the set of critical values of $f$, and we define $S=f^{-1}(S') \subset X$.

Now let $\omega(t)$ be the smooth global solution of the normalized K\"ahler-Ricci flow
\begin{equation}\label{KRF1}
\frac{\de \omega}{\de t}=-Ric(\omega)-\omega,~\omega|_{t=0}=\omega_0.
\end{equation} 
It's well-known \cite{TiZha, Ts} that the flow has a global solution on $X\times[0,\infty)$.  It's shown by Song-Tian \cite{ST,ST2} that $\omega(t)$ collapses nonsingular Calabi-Yau fibers and the flow converges weakly to a generalized K\"ahler-Einstein metric $\omega_B$ on its canonical model $B$, with $\omega_B$ is smooth and satisfies the generalized Einstein equation on $B\backslash S'$
\begin{equation}\label{tKEE on canonical model}
Ric(\omega_B)=-\omega_B+\omega_{\mathrm{WP}},
\end{equation}
where $\omega_{\mathrm{WP}}$ is the Weil-Petersson metric induced by the Calabi-Yau fibration $f$.  They also proved the $C^0$-convergence on the potential level and in the case when $X$ is an elliptic surface the $C_{loc}^{1,\alpha}$-convergence of potentials on $X\backslash S$ for any $\alpha < 1$.  In \cite{ST3}, Song-Tian showed that the scalar curvature is uniformly bounded on $X\times [0,\infty)$ along the normalized flow. The case when $X$ is of general type is given by Z.Zhang in \cite{Z1}.  The case for conical K\"ahler-Ricci flow is given by G.Ewards in \cite{Ed}.

In \cite{FZ}, Fong-Zhang proved the $C^{1,\alpha}$-convergence of potentials when $X$ is a global submersion over $B$ and showed the Gromov-Hausdorff convergence in the special case.  In \cite{TWY} Tosatti-Weinkove-Yang improved the estimate and showed that the metric $\omega(t)$ converges to $f^*\omega_B$ in the $C^0$ local-topology on $X\backslash S$. Moreover, Tosatti-Weinkove-Yang \cite{TWY} proved that the restricted metric $\omega(t)|_{X_y}$ converges (up to scalings) in the $C^0$-topology to the unique Ricci flat metric on the fibre $X_y$ for any regular value $y$; this result is improved to be smooth convergence by Tosatti-Zhang in \cite{ToZh}.  Also see Tosatti's note \cite{To15} for clearer and more unified discussions.

In fact, Tosatti-Weinkove-Yang \cite{TWY} obtained in their proof that $\|\om(t)-\tta\om(t)\|_{\om(t)}\to 0$ as $t\to \infty$ on $X\backslash S$, where $\tta\om(t)=e^{-t}\om_\SRF+(1-e^{-t})\om_B$ (see Section 2 for definition of $\om_\SRF$).  This enable us to prove that $|\tr{\om(t)}{\om_B}-m|+\left|\|\om_B\|_{\om(t)}^2-m\right|\to 0$ as $t\to \infty$ on $X\backslash S$, which then enable us to improve the estimate of scalar curvature on $X\backslash S$, following the argument of Song-Tian \cite{ST3}.  In this paper, we prove that the scalar curvature $R(t)$ converges on the regular part $X\backslash S$.
\begin{theorem}\label{main theorem}
Let $(X,\omega_0)$ be given as above, let $\omega(t)$ be the smooth global solution of the normalized K\"ahler-Ricci flow \eqref{KRF1}. Then we have
\begin{equation}\label{scc1}
\lim_{t\to \infty}R(t)=-m,~on~X\backslash S\times [0,\infty).
\end{equation}
In particular, if $S=\emptyset$, then $f$ is a holomorphic submersion and we have
\begin{equation}\label{scc2}
\left|R(t)+m\right|\le Ce^{-\eta t},~on~X\times [0,\infty),
\end{equation}
for some constants $\eta,C>0$ depending on $(X,\omega_0)$.
\end{theorem}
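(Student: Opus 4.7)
The plan is to derive a parabolic PDE for $R+m$ whose forcing term can be controlled via the Tosatti--Weinkove--Yang estimate, and then to apply a maximum-principle argument. Combining the standard evolution $(\de_t - \Delta)R = R + |\Ric|^2$ along the normalized K\"ahler-Ricci flow with the algebraic identity $|\Ric + \om|^2_\om = |\Ric|^2 + 2R + n$ and with the flow equation $\Ric(\om) + \om = -\de_t\om$, a short computation yields
\begin{equation*}
(\de_t - \Delta_{\om(t)} + 1)(R+m) = |\de_t \om(t)|^2_{\om(t)} - (n-m).
\end{equation*}
Thus it suffices to show that the forcing $\Psi(t) := |\de_t \om(t)|^2_{\om(t)} - (n-m)$ tends to $0$ on $X\backslash S$, and does so uniformly and exponentially on $X$ when $S = \emptyset$.

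To analyze $\Psi$, I split $\de_t\om = \de_t\tta\om + \de_t(\om - \tta\om)$ with $\tta\om(t) = e^{-t}\om_\SRF + (1-e^{-t})\om_B$. A direct computation of $|\de_t\tta\om|^2_{\tta\om}$ in a local horizontal/vertical splitting on $X\backslash S$ --- using that $\om_\SRF$ restricts to the Ricci-flat metric on each regular fiber --- shows that the horizontal contribution is $O(e^{-2t})$ (since $\de_t\tta\om|_H$ is of order $e^{-t}$ while $\tta\om|_H$ remains bounded), and that the vertical contribution is $(n-m) + O(e^{-t})$ (since $\de_t\tta\om|_V \approx -\tta\om|_V$, whose $\tta\om$-norm squared on the $(n-m)$-dimensional vertical subspace equals $n-m$). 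The TWY convergence $\|\om(t) - \tta\om(t)\|_{\om(t)} \to 0$ on $X\backslash S$, together with a parabolic bootstrap controlling the time-derivative $\de_t(\om - \tta\om)$ in the $\om(t)$-norm, then allows us to pass from $\tta\om$ to $\om(t)$ and conclude $\Psi \to 0$ on $X\backslash S$, with exponential rate in the submersion case.

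Finally, applying the maximum principle to $(\de_t - \Delta + 1)(R+m) = \Psi$ completes the argument. In the submersion case $S = \emptyset$, $\Psi$ decays uniformly and exponentially, so the ODE comparison $\tfrac{d}{dt}\max_X(R+m) + \max_X(R+m) \le \max_X\Psi$ together with Gronwall gives the desired exponential estimate $|R+m| \le Ce^{-\eta t}$. In the general case, combining the a priori uniform bound on $R+m$ from \cite{ST3} with the pointwise decay of $\Psi$ on compact subsets of $X\backslash S$, via a localized barrier/cutoff argument, yields $R(t) \to -m$ pointwise on $X\backslash S$. The main obstacle is controlling the extra term $\de_t(\om - \tta\om)$: the TWY estimate directly bounds $\om - \tta\om$ in the $\om(t)$-norm but not its time-derivative, and obtaining the required parabolic regularity for this piece --- building on the techniques of TWY and Song--Tian --- constitutes the technical heart of the argument.
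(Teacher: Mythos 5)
Your proposal takes a genuinely different route from the paper, and it contains a genuine gap that you yourself flag. The paper does not derive a parabolic PDE for $R+m$ and push the forcing term through a maximum principle. Instead, it exploits the exact algebraic identity $\Ric(\om) = -\ddbar u - \om_B$ where $u = \dot\vp + \vp - v$, which upon taking the trace with $\om(t)$ gives $R = -\tr{\om(t)}{\om_B} - \Delta u$, i.e. $R + m = -(T_0 - m) - \Delta u$. The main theorem then follows immediately from two separately established estimates: $|T_0 - m| \to 0$ (Proposition 3.2, a linear-algebra consequence of the TWY $C^0$ estimate in local product coordinates after a scaling normalization) and $|\Delta u| \to 0$ (Proposition 4.4, a localized maximum-principle argument on the auxiliary quantity $(-\Delta u \mp (T_0-m))/(A(t)-u)$ combined with $|\nabla u|^2/(A(t)-u)^2$, using the decreasing barrier $A(t)$ from Lemma 4.1 and the cutoff of Lemma 4.2). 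No control on the full Hessian $\nabla\bar\nabla u$ or on $|\de_t\om|^2$ is ever needed.

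The gap in your route is exactly the one you name, and it is not merely technical. Your forcing term is $\Psi = |\de_t\om|^2_{\om} - (n-m)$, and with $\de_t\om = \ddbar u + (\om_B - \om)$ one expands
\begin{equation*}
|\de_t\om|^2_{\om} \;=\; |\ddbar u|^2_{\om} + 2\mathrm{Re}\langle\ddbar u,\om_B\rangle_{\om} - 2\Delta u + \|\om_B\|^2_{\om} - 2T_0 + n.
\end{equation*}
Proposition 3.2 and Proposition 4.4 handle $\|\om_B\|^2_{\om} - 2T_0 + n \to n-m$ and $\Delta u \to 0$, but showing $\Psi \to 0$ still requires $|\ddbar u|^2_{\om} + 2\mathrm{Re}\langle\ddbar u,\om_B\rangle_{\om} \to 0$, i.e. some control of the \emph{full} complex Hessian of $u$, not just its trace. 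The TWY estimate gives $\|\om(t)-\tta\om(t)\|_{\om(t)} \to 0$ but says nothing directly about $\de_t(\om-\tta\om)$, and your proposed ``parabolic bootstrap'' for it is precisely the missing content: a $C^0$ bound on a solution of a parabolic system does not yield time-derivative bounds without interior Schauder-type estimates, which in turn need geometry (curvature, injectivity radius) that is degenerating here. So your scheme replaces the paper's relatively soft estimate $\Delta u \to 0$ by the strictly harder $\nabla\bar\nabla u \to 0$, which is not established by the cited results and would require substantial new work. If you want to rescue this route, the most promising step is to try to prove $|\ddbar u|_{\om} \to 0$ directly (this is equivalent to $|\Ric(\om(t))|_{\om(t)}^2 \to m$ given the rest), but note that that is itself a stronger statement than the theorem you are trying to prove.
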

After rescaling time and space simultaneously, we have the following immediately corollary from Theorem \ref{main theorem} of the unnormalized K\"ahler-Ricci flow.
\begin{corollary}\label{corollary}
Let $(X,\omega_0)$ be given as above, let $\omega(t)$ be the smooth global solution of the unnormalized K\"ahler-Ricci flow
\begin{equation}\label{unscc3}
\frac{\de \omega}{\de t}=-Ric(\omega),~\omega|_{t=0}=\omega_0.
\end{equation}
Then we have
\begin{equation}\label{unscc4}
\lim_{t\to \infty}(1+t)R(t)=-m,~on~X\backslash S\times [0,\infty).
\end{equation}
In particular, if $S=\emptyset$, then $f$ is a holomorphic submersion and we have
\begin{equation}
\left|(1+t)R(t)+m\right|\le\frac{C}{(1+t)^\eta},~on~X\times [0,\infty),
\end{equation}
for some constants $\eta,C>0$ depending on $(X,\omega_0)$.
\end{corollary}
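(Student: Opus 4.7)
My plan is to deduce Corollary \ref{corollary} directly from Theorem \ref{main theorem} via the standard parabolic rescaling that converts the unnormalized K\"ahler-Ricci flow \eqref{unscc3} into the normalized flow \eqref{KRF1}. The only input from analysis is Theorem \ref{main theorem} itself; the rest is a bookkeeping change of variables.

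Concretely, let $\ti\om(t)$ denote the solution of \eqref{unscc3} with initial datum $\om_0$, and let $\ti R(t)$ be its scalar curvature. I would introduce a new time parameter $s=\log(1+t)$ and set
\begin{equation*}
\om(s):=(1+t)^{-1}\ti\om(t)=e^{-s}\ti\om(e^s-1).
\end{equation*}
Since $\Ric$ is invariant under constant rescaling of the metric and $\frac{\partial}{\partial s}=(1+t)\frac{\partial}{\partial t}$, a direct calculation gives
\begin{equation*}
\frac{\partial\om}{\partial s}= -\om + (1+t)\frac{\partial\ti\om}{\partial t} = -\om - \Ric(\ti\om) = -\Ric(\om) - \om,
\end{equation*}
together with $\om(0)=\ti\om(0)=\om_0$, so $\om(s)$ solves \eqref{KRF1}. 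Because scalar curvature scales inversely to the metric, its scalar curvature $R(s)$ of $\om(s)$ satisfies $\ti R(t)=(1+t)^{-1}R(s)$, equivalently $(1+t)\ti R(t)=R(\log(1+t))$.

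Applying Theorem \ref{main theorem} pointwise on $X\backslash S$ then yields
\begin{equation*}
\lim_{t\to\infty}(1+t)\ti R(t)=\lim_{s\to\infty}R(s)=-m,
\end{equation*}
which is \eqref{unscc4}. In the submersion case $S=\emptyset$, inserting $s=\log(1+t)$ into \eqref{scc2} immediately gives
\begin{equation*}
|(1+t)\ti R(t)+m|\le Ce^{-\eta s}=\frac{C}{(1+t)^\eta},
\end{equation*}
the stated polynomial decay rate. Since the deduction is purely a change of variables, I do not anticipate any genuine obstacle here; all the analytic content of Corollary \ref{corollary} is already encoded in Theorem \ref{main theorem}, and the only point requiring minor care is verifying the scaling identities above.
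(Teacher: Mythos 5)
Your proof is correct and is exactly the rescaling the paper alludes to when it says the corollary follows ``after rescaling time and space simultaneously''; you have merely spelled out the change of variables $s=\log(1+t)$, $\om(s)=(1+t)^{-1}\ti\om(t)$, and the scale-invariance of $\Ric$ together with $R(\lambda g)=\lambda^{-1}R(g)$, all of which are standard and verified correctly.
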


Note that in Theorem \ref{main theorem}, the limiting behavior of scalar curvature on the singular set $S$ is unknown.  A recent result of the author and two other authors \cite{JSS} says that: If the canonical bundle $K_X$ is semi-ample,  then for any K\"ahler class $[\omega]$ on $X$, there exists $\delta_{X, [\omega]}>0$ such that for any $0<\delta<\delta_{X, [\omega]}$, there exists a unique cscK metric in the K\"ahler class $ [K_X]+\delta[\omega]$.  Hence we can propose the following conjecture.
\begin{conjecture}
Let $X$ be an $n$-dimensional K\"ahler manifold with nef canonical bundle $K_X$ and positive Kodaira dimension. Then for any initial K\"ahler metric $\om_0$, the solution $\om(t)$ of the normalized K\"ahler-Ricci flow 
$$\ddt{\om} = - Ric(\om) - \om, ~ \om(0)=\om_0$$ converges in Gromov-Hausdorff topology to $\om_{B}$ and the scalar curvature $R(t)$ converges to  $-\textnormal{Kod}(X)$ in $C^0(X)$, where $\textnormal{Kod}(X)$ is the Kodaira dimension of $X$.  
\end{conjecture}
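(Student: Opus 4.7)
My plan is to follow the strategy sketched by the author in the introduction, which builds on the Tosatti--Weinkove--Yang \cite{TWY} estimate $\|\om(t) - \tta\om(t)\|_{\om(t)} \to 0$ on $X\setminus S$, where $\tta\om(t) = e^{-t}\om_\SRF + (1-e^{-t})\om_B$. The argument proceeds in three steps.

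\textbf{Step 1.} I would first convert the metric convergence into pointwise convergence of traces. Since $\om_B$ is the pullback of a K\"ahler form on the $m$-dimensional base $B$, it is semipositive of rank exactly $m$ on $X\setminus S$, and its nonzero eigenvalues with respect to $\tta\om(t)$ lie in the base directions, where $\tta\om(t) \to \om_B$ as $t \to \infty$. A direct linear-algebra computation then gives $\tr{\tta\om(t)}{\om_B} \to m$ and $\|\om_B\|_{\tta\om(t)}^2 \to m$ on $X\setminus S$, and the TWY estimate transfers these limits to $\tr{\om(t)}{\om_B} \to m$ and $\|\om_B\|_{\om(t)}^2 \to m$.

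\textbf{Step 2.} I would then relate the trace limits to $R(\om(t))$. Choose a reference volume form $\Om$ with $\Ric(\Om) = -\om_B + \om_{\mathrm{WP}}$ on $X\setminus S$, and write the normalized K\"ahler--Ricci flow in Monge--Amp\`ere form $\om(t)^n = e^{\dot\vp + \vp}\Om$ with $\om(t) = \tta\om(t) + \ddbar\vp$. Using $\Ric(\om) = \Ric(\Om) - \ddbar(\dot\vp + \vp)$, one obtains
$$R(\om(t)) = \tr{\om}{\om_B} - \tr{\om}{\om_{\mathrm{WP}}} + \Delta_{\om}(\dot\vp + \vp).$$
The first term converges to $m$ by Step 1. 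I would then adapt the maximum-principle argument of Song--Tian \cite{ST3}---which used uniform estimates on $|\dot\vp|$ and related auxiliary quantities to establish mere boundedness of $R$---by feeding in the improved Step-1 convergences and verifying that the residual contributions involving $\om_{\mathrm{WP}}$ and $\Delta_{\om}(\dot\vp + \vp)$ combine to yield $R + m \to 0$ on $X\setminus S$.

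\textbf{Step 3.} For the case $S = \emptyset$, the submersion structure makes all estimates uniform on the compact $X$, and the reference convergence $\tta\om(t) \to \om_B$ carries the explicit rate $O(e^{-t})$. Tracking this exponential rate through Steps 1--2 equips each intermediate convergence with an exponential rate, and a parabolic maximum-principle or Gronwall argument applied to $e^{\eta t}(R + m)^2$ for sufficiently small $\eta > 0$, combined with the evolution equation for $R$, closes the exponential-decay estimate.

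I expect the principal difficulty to be the quantitative adaptation of the Song--Tian \cite{ST3} maximum-principle argument in Step 2. Their original proof deploys uniform $C^0$ bounds on auxiliary quantities to extract boundedness of $R$; upgrading the conclusion from ``bounded'' to ``tends to $-m$'' requires re-examining each auxiliary inequality and verifying that, with the improved Step-1 inputs, the residual error terms are themselves controlled by vanishing quantities. A secondary obstacle is the restriction to $X\setminus S$: both the TWY estimate and the Step-2 identity degenerate near $S$, so in the non-submersion case only pointwise convergence on the regular locus is available, with no uniform rate.
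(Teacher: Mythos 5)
The statement you have been asked about is labelled as a \emph{conjecture} in the paper, not a theorem, and the paper offers no proof of it; your proposal cannot be compared against ``the paper's own proof'' because there is none. What your proposal actually sketches is (a version of) the proof of Theorem~\ref{main theorem}, which is a strictly weaker result in three separate ways, and those three gaps are exactly why the statement remains a conjecture.

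First, the conjecture assumes only that $K_X$ is \emph{nef}, whereas your Steps~1--2 (the TWY estimate, the semi-Ricci-flat form $\om_\SRF$, the canonical fibration $f:X\to B$, the Monge--Amp\`ere solution $v$) all require the fibration structure coming from semi-ampleness of $K_X$. Passing from nef to semi-ample is the abundance conjecture, which is a deep open problem and cannot be assumed away. Second, the conjecture asserts $C^0(X)$ convergence of $R(t)$ to $-m$, i.e.\ uniform convergence over all of $X$ including the singular set $S$. You explicitly concede at the end of your proposal that ``only pointwise convergence on the regular locus is available,'' which is precisely the paper's Theorem~\ref{main theorem}; the paper itself remarks that ``the limiting behavior of scalar curvature on the singular set $S$ is unknown.'' Nothing in your Steps~1--3 controls $R(t)$ near $S$: Lemma~\ref{basic}, Theorem~\ref{C^0 convergence}, and the cutoff construction of Lemma~\ref{cutoff function} all degenerate as one approaches $S$. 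Third, the conjecture also asserts Gromov--Hausdorff convergence of $(X,\om(t))$ to $\om_B$, and your proposal does not address this at all.

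As a purely technical remark on the part of the sketch that does correspond to the theorem: your Step~2 decomposition $R = \tr{\om}{\om_B} - \tr{\om}{\om_{\mathrm{WP}}} + \Delta_\om(\dot\vp+\vp)$ has the wrong signs (from $\Ric(\om) = \Ric(\Omega)-\ddbar(\dot\vp+\vp)$ one gets $R = -\tr{\om}{\om_B}+\tr{\om}{\om_{\mathrm{WP}}}-\Delta(\dot\vp+\vp)$) and carries the extra $\om_{\mathrm{WP}}$ term along. The paper avoids this by choosing $\Omega$ with $\ddbar\log\Omega=\chi$ (not $\om_B-\om_{\mathrm{WP}}$) and introducing $u=\dot\vp+\vp-v$, which yields the cleaner identity $\Ric(\om)=-\ddbar u-\om_B$, hence $R=-T_0-\Delta u$. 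The Weil--Petersson form then never needs to be traced against $\om(t)$, and the whole problem reduces to showing $T_0\to m$ and $\Delta u\to 0$, which is what Propositions~\ref{base convergence} and~\ref{Laplacian u convergence} accomplish via carefully engineered time-dependent barriers $A(t)$. If you wish to carry through your $\om_{\mathrm{WP}}$-based decomposition you would need an additional argument that $\tr{\om}{\om_{\mathrm{WP}}}\to 0$, which is extra work the paper sidesteps.
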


In general, it is natural to ask if the following holds  for the maximal solution of the unnormalized K\"ahler-Ricci flow on $X\times [0, T)$, where $X$ is a K\"ahler manifold and $T>0$ is the maximal existence time.

\begin{enumerate}
\item If $T<\infty$, then there exists $C>0$ such that $$-C\leq R(t)\leq C(T-t)^{-1}. $$

\item If $T=\infty$, then there exists $C>0$ such that $$|R(t) | \leq C (1+t) ^{-1}. $$
\end{enumerate}

In \cite{SeT}, the answer to the first question is affirmative due to Perelman for the K\"ahler-Ricci flow  on Fano manifolds with finite time extinction. In \cite{Z2}, it is shown that if the K\"ahler-Ricci flow develops finite time singularity, the scalar curvature blows up at most of rate $(T-t)^{-2}$ if $X$ is projective and if the initial K\"ahler class lies in $H^2(X, \mathbb{Q})$.

{\bf Acknowledgements.} The author would like to thank his advisor Gang Tian for leading him to study K\"ahler-Ricci flow, constant encouragement and support.  The author would like to thank Jian Song for helpful discussions.  The author also would like to thank Yalong Shi and Dongyi Wei for helpful discussions.  This work was carried out while the author was visiting Jian Song at the Department of Mathematics of Rutgers University, supported by the China Scholarship Council (File No.201706010022).  The author would like to thank the China Scholarship Council for supporting this visit.  The author also would like to thank Jian Song and the Department of Mathematics of Rutgers University for hospitality and support.

\section{Preliminary for the K\"ahler Ricci-flow}
\label{Preliminary}
In this section let us recall some known results that we need in our proof.

From \eqref{canonical map}, we have $f^*\mathcal{O}(1)=K_X^{\otimes \ell}$, hence if we let $\chi=\frac{1}{\ell}\omega_{\mathrm{FS}}$ on $\mathbb{P}H^0(X,K_X^{\otimes \ell})$, we have that $f^*\chi$ (later, denoted by $\chi$) is a smooth semi-positive representative of $-c_1(X)$.  Here, $\omega_{\mathrm{FS}}$ denotes the Fubini-Study metric.  Also, we denote by $\chi$ the restriction of $\chi$ to $B\backslash S'$. 

Given a K\"ahler metric $\omega_0$ on $X$, since $X_y:=f^{-1}(y)$ are Calabi-Yau for $y\in B\backslash S'$, there exists a unique smooth function $\rho_y$ on $X_y$ with $\int_{X_y}\rho_y\omega_0^{n-m}=0$, and such that $\omega_0|_{X_y}+\ddbar\rho_y=:\omega_y$ is the unique Ricci-flat K\"ahler metric on $X_y$.  Moreover, $\rho_y$ depends smoothly on $y$, and so define a global smooth function on $X\backslash S$.  We define
$$\omega_{\SRF}=\omega_0+\ddbar\rho,$$
which is a closed real $(1,1)$-form on $X\backslash S$, restricts to a Ricci-flat K\"ahler metric on all fibers $X_y$ of $y\in B\backslash S'$.

Let $\Omega$ be the smooth volume form on $X$ with
\begin{equation}\label{background volume form}
\ddbar\log\Omega=\chi,~\int_X\Omega=\binom{n}{m}\int_X\omega_0^{n-m}\wedge\chi^m.
\end{equation}
Define a function $F$ on $X\backslash S$ by
\begin{equation}
F:=\frac{\Omega}{\binom{n}{m}\chi^m\wedge\omega_{\SRF}^{n-m}},
\end{equation}
then $F$ is constant along the fiber $X_y$, $y\in B\backslash S'$, so it descends to a smooth function on $B\backslash S^\prime$.  Then \cite{ST2} showed that the Monge-Amp\'ere equation
\begin{equation}\label{MAE on canonical model}
(\chi +\ddbar v)^m=Fe^v\chi^m,
\end{equation}
has a unique solution $v\in \PSH(\chi)\cap C^0(B)\cap C^\infty(B\backslash S')$.  Define
$$\om_B=\chi+\ddbar v ,$$
which is a smooth K\"ahler metric on $B\backslash S'$, satisfies the twisted K\"ahler-Einstein equation
$$\Ric(\om_B)=-\om_B+\om_{\mathrm{WP}},$$
where $\om_{\mathrm{WP}}$ is the smooth Weil-Petersson form on $B\backslash S'$.

Now let $\om=\om(t)$ be the solution of the normalized K\"ahler-Ricci flow
\begin{equation}\label{nKRF}
\ddt\om=-\Ric(\om)-\om,~\om(0)=\om_0,
\end{equation}
which exists for all time.  Define the reference metrics
$$\hat{\om}(t)=e^{-t}\om_0+(1-e^{-t})\chi,$$
which are K\"ahler for all $t\ge 0$, and we can write $\om(t)=\hat{\om}(t)+\ddbar\vp(t)$, and $\vp(0)=0$, then the K\"ahler-Ricci flow \eqref{nKRF} is equivalent to the parabolic Monge-Amp\'ere equation
\begin{equation}\label{scalar MAE}
\ddt\vp=\mathrm{log}\frac{e^{(n-m)t}\left(\hat{\om}(t)+\ddbar\vp(t)\right)^n}{\Om}-\vp,~\vp(0)=0.
\end{equation}

From now on, we always set $K=f^{-1}(K')$， where $K'\subset B\backslash S'$ is a compact subset.  Then we can choose some open subset $U'\subset\subset B\backslash S'$ such that $K'\subset U'$.  Set $U=f^{-1}(U')$, then $K\subset \subset U\subset \subset X\backslash S$.  Also, we denote by $h(t)$ some positive decreasing function on $[0,\infty)$ which tends to zero as $t\to \infty$.

Now we have the following lemmas.  See \cite{TWY,To15} for unified discussions (and also \cite{FZ,ST2,ST3}).
\begin{lemma}\label{basic}
There exists some constant $C=C(K)$ and $h(t)$ depending on the domain $K$, such that
\begin{enumerate}
\item [(1)]~$C^{-1}\hat{\om}(t)\le\om(t)\le C\hat{\om}(t)$, on $K\times[0,\infty).$ 
\item [(2)]\quad $|\vp-v|+|\dot{\vp}+\vp-v|\le h(t)$,\quad on $K\times[0,\infty).$
\item [(3)]There exists a uniform $C_0>0$ such that
$$|R|\le C_0,~on~X\times[0,\infty).$$
\item [(4)]~$\tr{\om(t)}{\om_B}-m\le h(t),~on~K\times[0,\infty).$
\item [(5)]Especially, if $S=\emptyset$, then (1)-(4) hold with $K$ replaced by $X$ and $h(t)$ replaced by $Ce^{-\eta t}$ for some constants $\eta,C>0$ depending on $(X,\om_0)$.
\end{enumerate}
\end{lemma}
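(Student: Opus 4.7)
The plan is to assemble (1)--(5) from existing work on collapsing K\"ahler-Ricci flow, with one short direct computation for the time-derivative part of (2); nothing essentially new is needed.

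For (1), I would invoke the parabolic collapsing estimate of Tosatti-Weinkove-Yang \cite{TWY}, refining Fong-Zhang \cite{FZ}: on a compact $K\subset\subset X\setminus S$, both $\tr{\om}{\hat\om}$ and $\tr{\hat\om}{\om}$ are controlled by applying a parabolic Schwarz-lemma argument to $\log\tr{\om}{\hat\om}-A\vp$ with $A$ large, using the semi-positivity of $\chi=f^*\om_{\mathrm{FS}}/\ell$, the smoothness of $v$ on $B\setminus S'$, and the volume form bound coming from \eqref{scalar MAE}. For the $C^0$ estimate $|\vp-v|\le h(t)$ in (2), I would cite the Kolodziej-type pluripotential comparison in Song-Tian \cite{ST2}, with $v$ serving as a global barrier. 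Part (3) is the scalar curvature bound of Song-Tian \cite{ST3}, proved by applying the maximum principle to $\dot\vp$ together with an auxiliary Laplacian estimate. Part (4) is the one-sided parabolic Schwarz inequality for $f\colon(X,\om(t))\to(B,\om_B)$ in \cite{TWY,To15}, obtained by applying the maximum principle to $\log\tr{\om}{f^*\om_B}-A\vp$ and exploiting $\Ric(\om_B)\ge -\om_B$ from \eqref{tKEE on canonical model}.

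The only piece I would write out by hand is $|\dot\vp+\vp-v|\le h(t)$ in (2). From \eqref{scalar MAE} one has
\begin{equation*}
\dot\vp+\vp=\log\frac{e^{(n-m)t}\om(t)^n}{\Om}.
\end{equation*}
The smooth fiberwise convergence of $e^t\om(t)|_{X_y}$ to $\om_{y,\SRF}$ (from \cite{ToZh}, improving \cite{TWY}) together with the horizontal convergence $\om(t)\to f^*\om_B$ in $C^0_{\mathrm{loc}}(X\setminus S)$ gives, uniformly on $K$,
\begin{equation*}
e^{(n-m)t}\om(t)^n = \binom{n}{m}\,\om_{\SRF}^{n-m}\wedge (f^*\om_B)^m + o(1),
\end{equation*}
and by \eqref{background volume form} and the Monge-Amp\`ere equation \eqref{MAE on canonical model} the right-hand side equals $e^{v}\Om+o(1)$. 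Taking logarithms, combined with the $C^0$ estimate $|\vp-v|\to 0$, yields the claim.

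Finally for (5), if $S=\emptyset$ then $v$ is smooth on all of $B$ and every cutoff/barrier argument used above extends to $X$. The exponential decay $h(t)=Ce^{-\eta t}$ is established in \cite{FZ,TWY} by applying the maximum principle globally to $(\vp-v)$ and to the Schwarz-lemma auxiliary quantities: once all coefficients in the linearization $\dot u=\Delta_\om u-u+\cdots$ are uniformly bounded on $X$, one reads off a uniform exponential rate. The main technical subtlety in the whole lemma is the passage from the fiberwise smooth convergence of $\om(t)$ to uniform convergence of the top power $e^{(n-m)t}\om(t)^n$ on horizontal neighborhoods, which is precisely what the transverse estimates of \cite{TWY} supply; once this is in hand, (1)--(5) reduce to standard parabolic maximum-principle calculations available in the cited references.
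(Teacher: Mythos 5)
The paper offers no proof of this lemma: it is presented as a compilation of known estimates with a pointer to \cite{TWY,To15} (and also \cite{FZ,ST2,ST3}), and your proposal does exactly the same, citing the same sources. Your sourcing of (1), (3), (5), and both halves of (2) --- including the short direct computation for $\dot\vp+\vp-v$ via \eqref{scalar MAE}, \eqref{MAE on canonical model}, and \eqref{background volume form} --- is accurate (though the $C^0$ convergence of Theorem \ref{C^0 convergence} already suffices there; invoking the smooth fiberwise convergence of \cite{ToZh} is more than needed). One caveat on (4): the mechanism you describe, a one-sided parabolic Schwarz argument applying the maximum principle to $\log\tr{\om}{f^*\om_B}-A\vp$ using $\Ric(\om_B)\geq-\om_B$, would only yield a uniform bound $\tr{\om}{\om_B}\leq C$; the sharper asymptotic $\tr{\om}{\om_B}\leq m+h(t)$ requires coupling the Schwarz inequality with the decay of $u=\dot\vp+\vp-v$ from (2) via a more delicate auxiliary quantity of the type $(\tr{\om}{\om_B}-m)/(A(t)-u)$ (as in \cite{ST3} and \cite{To15}), or alternatively it follows from the tensor convergence of Theorem \ref{C^0 convergence} exactly as carried out in the paper's own Proposition \ref{base convergence}.
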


\begin{lemma}\label{Schwarz Lemma}
Along the normalized K\"ahler Ricci-flow, we have on $X\backslash S\times[0,\infty)$
\begin{equation}\label{evolution of u}
\left(\ddt-\Delta\right)(\dot{\vp}+\vp-v)=\tr{\om(t)}{\om_B}-m.
\end{equation}
and there exists some $C=C(K)>0$ such that
\begin{equation}\label{Schwarz inequality}
\left(\ddt-\Delta\right)\tr{\om(t)}{\om_B}\le C,~on~K\times[0,\infty).
\end{equation}
Especially, when $S=\emptyset$, then \eqref{evolution of u}, \eqref{Schwarz inequality} holds on $X\times[0,\infty)$ with $C$ depending on $(X,\om_0)$.
\end{lemma}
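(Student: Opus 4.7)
The plan is to prove the two assertions separately: identity \eqref{evolution of u} by directly differentiating the parabolic Monge-Amp\`ere equation \eqref{scalar MAE}, and inequality \eqref{Schwarz inequality} by applying a parabolic Chern-Lu/Schwarz-type lemma to the semi-positive pullback $f^*\om_B$ on a compact neighborhood inside $X\backslash S$.

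For \eqref{evolution of u}, I would set $u=\dot\vp+\vp-v$ and use \eqref{scalar MAE} in the rearranged form $\dot\vp+\vp=(n-m)t+\log\om^n-\log\Om$. Differentiating in $t$ and using $\ddt\om=-\Ric(\om)-\om$ gives $\ddt(\dot\vp+\vp)=(n-m)+\tr{\om}{\ddt\om}=-R-m$. Applying $\ddbar$ and using $\ddbar\log\om^n=-\Ric(\om)$, $\ddbar\log\Om=\chi$, together with $\ddbar v=\om_B-\chi$ (since $\om_B=\chi+\ddbar v$), produces $\ddbar u=-\Ric(\om)-\om_B$, hence $\Delta u=-R-\tr{\om}{\om_B}$. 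Subtracting yields the stated identity; this part is routine bookkeeping with no real obstacle.

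For \eqref{Schwarz inequality}, I would apply the standard Chern-Lu/Schwarz lemma to the holomorphic map $f$ with target metric $\om_B$. On the chosen compact set $K'\subset U'\subset\subset B\backslash S'$ the metric $\om_B$ is smooth, so its holomorphic bisectional curvature is bounded above by some $K_0=K_0(U')$; this yields an inequality of the form
$$(\ddt-\Delta)\log\tr{\om}{f^*\om_B}\le K_0\,\tr{\om}{f^*\om_B}+C'$$
on $f^{-1}(U')\times[0,\infty)$ after absorbing the Ricci contribution via $\ddt g_{i\bar j}=-R_{i\bar j}-g_{i\bar j}$. By Lemma \ref{basic}(1), $\om$ and $\hat\om$ are uniformly equivalent on $K$, and since $v$ is smooth on $\overline{U'}$ one has $f^*\om_B\le C''\hat\om$ on $K$; hence $\tr{\om}{f^*\om_B}$ is uniformly bounded on $K\times[1,\infty)$, making the right-hand side uniformly bounded. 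Converting from $\log w$ to $w:=\tr{\om}{f^*\om_B}$ via $(\ddt-\Delta)w=w(\ddt-\Delta)\log w-|\nabla w|^2/w$ and dropping the nonpositive gradient term then produces \eqref{Schwarz inequality}.

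The main obstacle is the careful bookkeeping of the normalizing terms in the parabolic Chern-Lu computation, together with the fact that the argument is intrinsically local on $X\backslash S$: the bisectional-curvature bound on $\om_B$ degenerates near $S'$, so one cannot expect a uniform global inequality. In the case $S=\emptyset$, Lemma \ref{basic}(5) supplies uniform constants globally and the same proof runs on all of $X$.
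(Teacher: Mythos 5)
The paper states this lemma without proof, citing \cite{TWY,To15,FZ,ST2,ST3}; your derivation reproduces the standard argument those references use. Your computation of \eqref{evolution of u} by differentiating the parabolic Monge--Amp\`ere equation and pairing $\partial_t u=-R-m$ with $\Delta u=-R-\tr{\om}{\om_B}$ (equivalently, the identity $\Ric(\om)=-\ddbar u-\om_B$, which is exactly the paper's later equation \eqref{relation of Ric and u}) is correct, and your treatment of \eqref{Schwarz inequality} via the parabolic Chern--Lu estimate on $f^{-1}(U')$, together with the a priori bound on $\tr{\om}{\om_B}$ over $K$ (equivalently Lemma \ref{basic}(4)) and the conversion from $\log w$ to $w$, is the standard route and gives a constant depending on $K$ through the bisectional-curvature bound of $\om_B$ on $\ov{U'}$, exactly as the statement requires.
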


Next we define on $X\backslash S$ the reference metrics
$$\tta{\om}(t)=e^{-t}\om_{\SRF}+(1-e^{-t})\om_B.$$
Then we have the following theorem due to \cite{TWY} (in the proof).
\begin{theorem}\label{C^0 convergence}
There exists $h(t)$ depending on the domain $K$ such that
\begin{equation}\label{C^0 c1}
\|\om(t)-\tta\om(t)\|_{C^0(K,\om(t))}\le h(t).
\end{equation}
Especially, when $S=\emptyset$, then
\begin{equation}\label{C^0 c2}
\|\om(t)-\tta\om(t)\|_{C^0(X,\om(t))}\le Ce^{-\eta t}.
\end{equation}
for some constants $\eta,C>0$ depending on $(X,\om_0)$.
\end{theorem}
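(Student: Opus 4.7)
The plan is to work with the pointwise eigenvalues $\lambda_1(x,t),\ldots,\lambda_n(x,t)$ of $\tta\om(t)$ with respect to $\om(t)$, so that
\begin{equation*}
|\om(t)-\tta\om(t)|_{\om(t)}^{2} = \sum_{i=1}^n (1-\lambda_i)^2.
\end{equation*}
Because all the $\lambda_i$ will turn out to be bounded away from $0$ and $\infty$ on $K$ uniformly in $t$, the entropy-type function $H(\lambda):=\lambda-1-\log\lambda$ is comparable to $(1-\lambda)^2$ there, and the theorem reduces to proving
\begin{equation*}
\sum_{i=1}^{n} H(\lambda_i) \;=\; \tr{\om(t)}{\tta\om(t)} \;-\; n \;-\; \log\frac{\tta\om(t)^n}{\om(t)^n} \longrightarrow 0.
\end{equation*}
So I would break the proof into two ingredients: (i) the volume-ratio asymptotic $\om(t)^n/\tta\om(t)^n\to 1$, and (ii) the trace bound $\tr{\om(t)}{\tta\om(t)}\le n+h(t)$, both uniformly on $K$. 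The matching lower bound on the trace will then follow automatically from AM-GM applied to (i).

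For (i) I would expand $\tta\om(t)^n$ as a binomial polynomial in $e^{-t}$ and use that $\om_B$ is pulled back from the $m$-dimensional base, so $\om_B^k=0$ for $k>m$. Only the $k=n-m$ term contributes at order $e^{-(n-m)t}$, all others being $O(e^{-(n-m+1)t})$, giving
\begin{equation*}
\tta\om(t)^n = \binom{n}{m} e^{-(n-m)t}(1-e^{-t})^m\, \om_{\SRF}^{\,n-m}\wedge\om_B^m + O\bigl(e^{-(n-m+1)t}\bigr).
\end{equation*}
By the Monge-Amp\'ere equation \eqref{MAE on canonical model} and the normalization of $F$ in \eqref{background volume form} one computes $\binom{n}{m}\om_{\SRF}^{\,n-m}\wedge\om_B^m = e^v\Om$. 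On the other hand \eqref{scalar MAE} gives $\om(t)^n = e^{\dot\vp+\vp-(n-m)t}\Om$, so Lemma \ref{basic}(2) yields
\begin{equation*}
\frac{\om(t)^n}{\tta\om(t)^n} = e^{\dot\vp+\vp-v}\bigl(1+O(e^{-t})\bigr) \longrightarrow 1
\end{equation*}
uniformly on $K$.

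For (ii) I would split
\begin{equation*}
\tr{\om(t)}{\tta\om(t)} = (1-e^{-t})\tr{\om(t)}{\om_B} + e^{-t}\tr{\om(t)}{\om_{\SRF}}.
\end{equation*}
The first summand is bounded by $m+h(t)$ directly via Lemma \ref{basic}(4). The hard piece is the fibre analogue $e^{-t}\tr{\om(t)}{\om_{\SRF}}\le (n-m)+h(t)$, and this is the main obstacle. I expect to obtain it by a parabolic Schwarz-type maximum principle calculation applied to $\log\bigl(e^{-t}\tr{\om(t)}{\om_{\SRF}}\bigr)$, exploiting that $\om_{\SRF}|_{X_y}$ is Ricci-flat on every smooth fibre, together with the collapsing comparison in Lemma \ref{basic}(1) and the fibrewise $C^0$-convergence $e^t\om(t)|_{X_y}\to\om_{\SRF}|_{X_y}$ proved in \cite{TWY,ToZh}; this is precisely the input for which the regularity of the fibration away from $S$ is needed.

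Once (i) and (ii) are in place, AM-GM forces $\sum\lambda_i\ge n(\prod\lambda_i)^{1/n}\to n$, so $\sum\lambda_i\to n$ as well; combined with (i) this gives $\sum H(\lambda_i)\to 0$ on $K$, equivalently $|\om(t)-\tta\om(t)|_{\om(t)}\to 0$. In the case $S=\emptyset$ Lemma \ref{basic}(5) upgrades each $h(t)$ to $Ce^{-\eta t}$ throughout and produces the exponential rate \eqref{C^0 c2}.
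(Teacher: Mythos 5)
The paper does not actually prove this statement: it is quoted directly from Tosatti--Weinkove--Yang, ``\emph{Then we have the following theorem due to \cite{TWY} (in the proof).}'' So there is no internal argument to compare yours against, and the right standard is whether your outline would reconstruct a correct proof.

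Your reduction via the convexity function $H(\lambda)=\lambda-1-\log\lambda$ is the right framework, and matches TWY in spirit (after using that $\tta\om(t)$ is uniformly equivalent to $\hat\om(t)$ on $K$ for $t$ large, so the eigenvalues $\lambda_i$ are pinched in a fixed interval $[a,b]$, which makes $H(\lambda)\geq c(1-\lambda)^2$). Step (i), the volume ratio, is correct and cleanly executed: expanding $\tta\om(t)^n$, noting $\om_B^{n-k}=0$ for $k<n-m$, and using $\binom{n}{m}\om_{\SRF}^{n-m}\wedge\om_B^m=e^v\Om$ together with \eqref{scalar MAE} and Lemma \ref{basic}(2) gives $\om(t)^n/\tta\om(t)^n\to 1$ uniformly on $K$. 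That part is solid.

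Step (ii) is where the genuine gap is, and it is the actual content of the theorem. Two concrete problems with your proposed route. First, $\om_{\SRF}$ is only a closed real $(1,1)$-form on $X\backslash S$ that restricts to Ricci-flat metrics on the fibres; it is not a Hermitian metric on $X\backslash S$ (it is not even known to be semi-positive in the base directions). A Chern--Lu/parabolic Schwarz computation for $\log\bigl(e^{-t}\tr{\om(t)}{\om_{\SRF}}\bigr)$ is therefore not well-posed as written: the usual inequality requires the target form to be a genuine metric whose bisectional curvature you can bound above, and here neither $\tr{\om(t)}{\om_{\SRF}}$ nor that curvature is controlled or even of a definite sign. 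Second, invoking the fibrewise $C^0$-convergence $e^t\om(t)|_{X_y}\to\om_{\SRF}|_{X_y}$ from \cite{TWY,ToZh} as an input is circular: in \cite{TWY} that statement (their Theorem 1.2) is derived from the norm estimate \eqref{C^0 c1} (their Theorem 1.1) by restricting to a fibre in local product coordinates, and \cite{ToZh} builds on \cite{TWY}. So you would be using a corollary of the theorem to prove the theorem. To make (ii) non-circular you would have to produce an independent bound on the fibre part of the trace, e.g.\ by running a maximum-principle estimate on the full (non-degenerate, $t$-dependent) reference form $\tta\om(t)$ rather than on $\om_{\SRF}$, which is in effect what the argument in \cite{TWY} does.
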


We also need the following lemma to choose local coordinates on the regular part, see e.g. Lemma 5.6 of \cite{To15}.
\begin{lemma}\label{local coordinates}
Let $f:X^n\to Y^m$ be a holomorphic submersion between complex manifolds.  Then given any point $x\in X$ we can find an open set $U\ni x$ and local holomorphic coordinates $(z_1,\dots,z_n)$ on $U$ and $(y_1,\dots,y_m)$ on $f(U)$ such that in these coordinates the map $f$ is given by $(z_1,\dots,z_n)\mapsto(z_1,\dots,z_m)$, i.e., $y_1=z_1,\dots,y_m=z_m.$
\end{lemma}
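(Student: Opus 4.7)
The plan is to apply the holomorphic inverse function theorem to straighten out the submersion, exactly as in the real smooth case. First I would pick arbitrary local holomorphic coordinates $(w_1,\dots,w_n)$ centered at $x$ on some neighborhood of $x$ in $X$ and local holomorphic coordinates $(y_1,\dots,y_m)$ centered at $f(x)$ on some neighborhood of $f(x)$ in $Y$. Writing $f_i=y_i\circ f$, the fact that $f$ is a holomorphic submersion means that the complex Jacobian matrix $\left(\partial f_i/\partial w_j\right)_{1\le i\le m,\,1\le j\le n}$ has rank $m$ at $x$.

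After a linear change of the $w$-coordinates (equivalently, after relabeling), I may assume that the $m\times m$ block $\left(\partial f_i/\partial w_j\right)_{1\le i,j\le m}$ is nonsingular at $x$. Then I define a new holomorphic map $\Phi$ from a neighborhood of $x$ into $\mathbb{C}^n$ by
\begin{equation*}
\Phi(w_1,\dots,w_n)=\bigl(f_1(w),\dots,f_m(w),w_{m+1},\dots,w_n\bigr).
\end{equation*}
The Jacobian of $\Phi$ at $x$ is block lower triangular with the invertible $m\times m$ block above and the $(n-m)\times(n-m)$ identity below, hence invertible. By the holomorphic inverse function theorem, $\Phi$ restricts to a biholomorphism from some open neighborhood $U\ni x$ onto an open subset of $\mathbb{C}^n$. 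Setting $(z_1,\dots,z_n):=\Phi(w_1,\dots,w_n)$ gives holomorphic coordinates on $U$, and in these coordinates the first $m$ components of $f$ read $y_i=z_i$ for $i=1,\dots,m$, which is precisely the desired form.

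The only conceptual point is the holomorphic inverse function theorem itself, which is a standard fact. There is no real obstacle; shrinking $U$ if necessary ensures $\Phi(U)$ is an open polydisc and that $f(U)$ lies in the chosen $y$-coordinate chart, so the statement holds as written.
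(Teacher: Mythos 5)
Your proof is correct and is the standard argument (holomorphic inverse function theorem applied to the map $\Phi=(f_1,\dots,f_m,w_{m+1},\dots,w_n)$). The paper itself gives no proof of this lemma and simply cites Lemma 5.6 of Tosatti's notes \cite{To15}, where this same straightening argument is used, so you are in agreement with the intended reference. One cosmetic slip: the Jacobian of $\Phi$ you write down is block \emph{upper} triangular (the zero block sits in the lower-left), not lower triangular; this does not affect the determinant computation or any subsequent step.
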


We can apply Lemma \ref{local coordinates} to a point $x\in X\backslash S,y=f(x)\in B\backslash S'$ to choose local coordinates, and we may call such coordinates ``local product coordinates''.

\section{Convergence of the trace and norm of $\om_B$ along the flow}
From now on, we denote by $T_0=\tr{\om(t)}{\om_B}$.

In this section, we use Theorem $\ref{C^0 convergence}$ to prove $|T_0-m|+\left|\|\om_B\|_{\om(t)}^2-m\right|\to 0$ as $t\to \infty$ on $X\backslash S$.  As before, we use $h(t),h_1(t),\dots$ to denote positive decreasing functions on $[0,+\infty)$ which tends to zero as $t\to\infty$.

First, we have the following basic estimate.
\begin{lemma}\label{C^0 estimate}
For any point $x\in U$ with local product coordinates given by Lemma \ref{local coordinates} around $x$ and $y=f(x)$, say $(z_1,\dots,z_n)$ around $x$ and $(y_1,\dots,y_m)$ around $y$.  Suppose on such coordinate neighborhood $\om(t)$ is given by 
$$\om(t)=\sum_{i,j=1}^{n}g(t)_{i\ov j}dz_i\wedge d\ov z_{ j},$$
then there exists some constant $C$ depending on the domain such that: for $1\le\alpha,\beta\le m$, $m+1\le i,j\le n,$
\begin{equation}\label{C^0 lower}
\left|g(t)_{\alpha\ov\beta}\right|\le C,~\left|g(t)_{\alpha\ov j}\right|\le Ce^{-\frac{t}{2}},~\left|g(t)_{i\ov j}\right|\le Ce^{-t}.
\end{equation}
\begin{equation}\label{C^0 upper}
\left|g(t)^{\alpha\ov\beta}\right|\le C,~\left|g(t)^{\alpha\ov j}\right|\le Ce^{\frac{t}{2}},~\left|g(t)^{i\ov j}\right|\le Ce^t.
\end{equation}
at $x$.  In particular, when $S=\emptyset$, \eqref{C^0 lower} and \eqref{C^0 upper} hold on $X\times[0,\infty)$ with $C$ depending on $(X,\om_0)$. 
\end{lemma}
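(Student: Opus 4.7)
The approach is to extract pointwise componentwise bounds from the uniform two-sided bound $C^{-1}\hat\om(t)\le\om(t)\le C\hat\om(t)$ of Lemma \ref{basic}(1), exploiting the block structure of the reference metric $\hat\om(t)=e^{-t}\om_0+(1-e^{-t})\chi$ in the local product coordinates provided by Lemma \ref{local coordinates}. The key observation is that in such coordinates $f$ is the projection $(z_1,\dots,z_n)\mapsto(z_1,\dots,z_m)$, so the pulled-back form $\chi=f^*\chi_B$ has components $\chi_{i\ov j}$ that vanish whenever at least one of $i,j$ exceeds $m$. Consequently
$$\hat g(t)_{\alpha\ov\beta}=(1-e^{-t})\chi_{\alpha\ov\beta}+e^{-t}(g_0)_{\alpha\ov\beta}=O(1),$$
$$\hat g(t)_{\alpha\ov j}=e^{-t}(g_0)_{\alpha\ov j}=O(e^{-t}),\qquad \hat g(t)_{i\ov j}=e^{-t}(g_0)_{i\ov j}=O(e^{-t}),$$
uniformly on $K$, for $1\le\alpha,\beta\le m$ and $m+1\le i,j\le n$.

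For \eqref{C^0 lower}, I would evaluate Lemma \ref{basic}(1) on the coordinate vectors $\de/\de z_k$ to extract the diagonal bounds $g(t)_{\alpha\ov\alpha}\le C$ and $g(t)_{i\ov i}\le Ce^{-t}$. Positivity of the $2\times 2$ principal minors of the Hermitian matrix $(g(t)_{i\ov j})$ yields the Cauchy--Schwarz inequality
$$|g(t)_{i\ov j}|^2\le g(t)_{i\ov i}\,g(t)_{j\ov j},$$
and combining the two facts produces all three inequalities of \eqref{C^0 lower} at once.

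The inverse estimates \eqref{C^0 upper} follow by dualizing. The two-sided bound between $\om(t)$ and $\hat\om(t)$ on $T^{1,0}X$ is equivalent to the same bound between $\om(t)^{-1}$ and $\hat\om(t)^{-1}$ on the cotangent bundle, so I need only the diagonal of $\hat g(t)^{-1}$. A Schur complement expansion of $\hat g(t)$ around its top-left $m\times m$ block (which is $O(1)$-positive and bounded away from degeneration since $\chi$ is K\"ahler on $K'$) shows that the top-left block of $\hat g(t)^{-1}$ is $O(1)$, while the bottom-right block is $e^tD(t)^{-1}+O(1)$, where $D(t)$ denotes the fiber-block of $g_0$. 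In particular $\hat g(t)^{\alpha\ov\alpha}=O(1)$ and $\hat g(t)^{i\ov i}=O(e^t)$, hence $g(t)^{\alpha\ov\alpha}=O(1)$ and $g(t)^{i\ov i}=O(e^t)$, and one more application of Cauchy--Schwarz to the Hermitian matrix $(g(t)^{i\ov j})$ delivers all of \eqref{C^0 upper}. The only real calculation is the Schur complement expansion, and no genuine obstacle arises; in the case $S=\emptyset$, Lemma \ref{basic}(5) makes the entire argument uniform on $X$.
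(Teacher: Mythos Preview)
Your proof is correct and is actually more elementary than the paper's for the lower-index bounds \eqref{C^0 lower}. The paper first establishes the uniform equivalence $C^{-1}\om_E(t)\le\om(t)\le C\om_E(t)$ with the local product Euclidean metric $\om_E(t)=\om^{(m)}+e^{-t}\om^{(n-m)}$, but then, rather than reading off the diagonal bounds directly and applying Cauchy--Schwarz as you do, it invokes Theorem \ref{C^0 convergence} to bound $\|\om(t)-\tta\om(t)\|_{\om_E(t)}^2$ and extracts the componentwise estimates from that via $|a-b|^2\ge\tfrac12|a|^2-|b|^2$. Your argument shows that Theorem \ref{C^0 convergence} is not actually needed for this lemma: the two-sided metric equivalence from Lemma \ref{basic}(1) alone suffices, since the Hermitian Cauchy--Schwarz inequality $|g_{i\ov j}|^2\le g_{i\ov i}g_{j\ov j}$ immediately promotes the diagonal bounds to the full block estimates. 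For \eqref{C^0 upper} the two proofs coincide in spirit---the paper also appeals to Lemma \ref{basic} for the first and third estimates and to Cauchy--Schwarz for the mixed one---though your Schur complement formulation of the inverse of $\hat g(t)$ is more explicit. One small point to flesh out: the uniform lower bound on the Schur complement $D-e^{-t}B^*A(t)^{-1}B$ for \emph{all} $t\ge0$ (not just large $t$) comes from continuity in $s=e^{-t}\in[0,1]$ together with positivity at the endpoints; this is the compactness step hiding behind ``no genuine obstacle arises.''
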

\begin{proof}
We define on such coordinate neighborhood (contained in $U$) the local metrics
\begin{equation}\label{local et coord}
\om_E(t)=\om^{(m)}+e^{-t}\om^{(n-m)},
\end{equation}
where $\om^{(m)}$ and $\om^{(n-m)}$ denotes the standard Euclidean metrics on the two factors of $\mathbb{C}^n=\mathbb{C}^m\times\mathbb{C}^{n-m}$.  Thanks to Lemma \ref{basic}, we can find constant $C$ depending on the domain $K$ such that 
\begin{equation}\label{local et equivalence}
C^{-1}\om_E(t)\le \om(t)\le C\om_E(t),
\end{equation}
Denote
$$\om_B=\sqrt{-1}\sum_{\alpha,\beta=1}^{m}(g_B)_{\alpha\bar{\beta}}dz_{\alpha}\wedge d\bar{z}_{\beta},~\om_\SRF=\sqrt{-1}\sum_{i,j=1}^{n}(g_\SRF)_{i\bar{j}}dz_i\wedge d\bar{z}_j,$$
Hence using \eqref{C^0 c1} of Theorem $\ref{C^0 convergence}$ we have
\[
\begin{split}
&C\geq \|\om(t)-\tta\om(t)\|_{\om_E(t)}^2\\
&\geq \sum_{\alpha,\beta=1}^{m}\left|g(t)_{\alpha\bar{\beta}}-(1-e^{-t})(g_B)_{\alpha\bar{\beta}}-e^{-t}(g_\SRF)_{\alpha\bar{\beta}}\right|^2\\
&+\sum_{\alpha=1}^{m}\sum_{j=m+1}^{n}e^t\left|g(t)_{\alpha\bar{j}}-e^{-t}(g_\SRF)_{\alpha\bar{j}}\right|^2+\sum_{i,j=m+1}^ne^{2t}\left|g(t)_{i\bar{j}}-e^{-t}(g_\SRF)_{i\bar{j}}\right|^2,
\end{split}
\]
on our coordinate neighborhood.  Then we can apply the trivial inequality $|a-b|^2\geq \frac{1}{2}|a|^2-|b|^2$ to each term to conclude \eqref{C^0 lower}. Next, from Lemma \ref{basic}, we have the first and third estimates of \eqref{C^0 upper}, and the second estimate then follows from Cauchy-Schwarz inequality.
\end{proof}

\begin{proposition}\label{base convergence}
There exists $h(t)$ depending on $K$ such that
\begin{equation}\label{base trace convergence}
\left|T_0-m\right|\leq h(t),~on~K\times[0,\infty).
\end{equation}
\begin{equation}\label{base norm convergence}
\left|\|\om_B\|_{\om(t)}^2-m\right|\leq h(t),~on~K\times[0,\infty).
\end{equation}
In particular, if $S=\emptyset$, then we have
\begin{equation}\label{base convergence2}
\left|T_0-m\right|+\left|\|\om_B\|_{\om(t)}^2-m\right|\leq Ce^{-\eta t},~on ~X\times[0,\infty),
\end{equation}
where $\eta,C>0$ are constants depending on $(X,\om_0).$ 
\end{proposition}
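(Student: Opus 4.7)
The plan is to work in local product coordinates from Lemma \ref{local coordinates} and run a Schur-complement analysis on the matrix of $\om(t)$. Since $K$ is compact, I cover it by finitely many such coordinate neighborhoods; on each, $f$ is the projection onto the first $m$ coordinates, so $\om_B$ is horizontal and its matrix $G_B=((g_B)_{i\ov j})$ is supported in the upper-left $m\times m$ block $\hat G_B=((g_B)_{\alpha\ov\beta})_{1\le\alpha,\beta\le m}$.

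On such a patch, write $G(t)=(g(t)_{i\ov j})$ in block form
\[
G(t)=\begin{pmatrix}A&B\\ B^{*}&D\end{pmatrix},
\]
with $A$ the upper-left $m\times m$ block. Expanding $\|\om(t)-\tta\om(t)\|_{\om(t)}^{2}\le h(t)^{2}$ from Theorem \ref{C^0 convergence} in these coordinates (via the equivalence $\om(t)\sim\om_E(t)$ employed in the proof of Lemma \ref{C^0 estimate}) yields the refined block expansions
\begin{align*}
A&=(1-e^{-t})\hat G_B+e^{-t}\hat G_{\SRF}+O(h(t)),\\
B&=e^{-t}\hat B_{\SRF}+O(h(t)e^{-t/2}),\\
D&=e^{-t}\hat D_{\SRF}+O(h(t)e^{-t}),
\end{align*}
where $\hat G_{\SRF},\hat B_{\SRF},\hat D_{\SRF}$ denote the corresponding blocks of $\om_{\SRF}$, and $\hat D_{\SRF}$ is positive-definite (it is the fiberwise Ricci-flat metric in these coordinates). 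In particular, $D^{-1}=e^{t}\hat D_{\SRF}^{-1}(I+O(h(t)))$.

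The key step is to verify that the Schur complement converges to $\hat G_B$. Substituting the expansions and collecting leading orders gives $BD^{-1}B^{*}=e^{-t}\hat B_{\SRF}\hat D_{\SRF}^{-1}\hat B_{\SRF}^{*}+O(h(t))\to 0$, while $A=\hat G_B+O(h(t)+e^{-t})$. By Schur's formula the upper-left $m\times m$ block of $G(t)^{-1}$ equals $P(t):=(A-BD^{-1}B^{*})^{-1}$, so $P(t)\to\hat G_B^{-1}$ with some rate $h_{1}(t)\to 0$ depending only on $K$. Since $G_B$ vanishes outside the upper-left block,
\[
T_{0}=\sum_{\alpha,\beta=1}^{m}g(t)^{\alpha\ov\beta}(g_B)_{\alpha\ov\beta}=\mathrm{tr}(P(t)\hat G_B),\qquad \|\om_B\|_{\om(t)}^{2}=\mathrm{tr}\bigl((P(t)\hat G_B)^{2}\bigr),
\]
and $P(t)\hat G_B\to I_{m}$ gives both quantities $\to m$ uniformly on each patch. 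Taking the maximum of the finitely many resulting $h_{1}(t)$'s yields the uniform estimate on $K$. The case $S=\emptyset$ is identical, using the exponential versions of Theorem \ref{C^0 convergence} and Lemma \ref{C^0 estimate} to obtain rate $Ce^{-\eta t}$.

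The main obstacle I anticipate is the decay $BD^{-1}B^{*}\to 0$: the pointwise size bounds $|B|=O(e^{-t/2})$ and $|D^{-1}|=O(e^{t})$ from Lemma \ref{C^0 estimate} alone only yield $BD^{-1}B^{*}=O(1)$. Extracting actual decay requires the refined leading-order structure provided by Theorem \ref{C^0 convergence}, which forces $B$ to behave like $e^{-t}\hat B_{\SRF}$ to leading order rather than saturating its $O(e^{-t/2})$ bound.
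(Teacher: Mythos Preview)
Your argument is correct and the block expansions you extract from Theorem~\ref{C^0 convergence} via $\om(t)\sim\om_E(t)$ are exactly right; in particular your observation that the off-diagonal block satisfies $B=e^{-t}\hat B_{\SRF}+O(h(t)e^{-t/2})$ (rather than merely $O(e^{-t/2})$) is the crucial refinement that makes $BD^{-1}B^{*}\to 0$.

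Your route differs from the paper's. The paper first rescales the fiber coordinates by $e^{t/2}$ via a map $F_t$, so that in the new coordinates $\om_1(t)=F_t^{*}\om(t)$ is uniformly equivalent to the Euclidean metric and $C^{0}$-close to a fixed block-diagonal matrix $\mathrm{diag}(\hat G_B,\hat D_{\SRF})$; it then compares determinants term by term and applies Cramer's rule to show every entry of $\om_1(t)^{-1}$ is close to the corresponding entry of $\mathrm{diag}(\hat G_B^{-1},\hat D_{\SRF}^{-1})$, finally reading off $T_0$ and $\|\om_B\|_{\om(t)}^{2}$ from the upper-left block. You bypass both the rescaling and the determinant/cofactor computation by invoking the Schur complement identity $(G^{-1})_{\mathrm{upper\,left}}=(A-BD^{-1}B^{*})^{-1}$, which isolates directly the only block that matters since $\om_B$ is purely horizontal. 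The two approaches are equivalent in content (the paper's rescaling amounts to conjugating $G(t)$ by $\mathrm{diag}(I_m,e^{-t/2}I_{n-m})$, under which the Schur complement is invariant), but yours is a bit more streamlined: it avoids the explicit $n!$-term expansion of $\det$ and cofactors, at the cost of tracking the $e^{-t}$ weights by hand in the $BD^{-1}B^{*}$ product.
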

\begin{proof}
Applying Theorem $\ref{C^0 convergence}$ to $\bar{U}$ there exists some $h_1(t)$ depending on the domain such that
\begin{equation}\label{local C^0 convergence}
\|\om(t)-\tta\om(t)\|_{C^0(\bar{U},\om(t))}^2\leq h_1(t).
\end{equation}
Now given any $x_0\in K$, we choose local product coordinate like Lemma $\ref{C^0 estimate}$, say $U_0\subset U$ around $x_0$.  WLOG, we may assume
$$U_0=B^{(m)}(1)\times B^{(n-m)}(1)\subset \mathbb{C}^n,~f(U_0)=B^{(m)}(1)\subset \mathbb{C}^m,~x_0=(0,0),$$
where $B^{(m)}(1)$ and $B^{(n-m)}(1)$ denotes Euclidean unit balls in $\mathbb{C}^m$ and $\mathbb{C}^{n-m}$, respectively.  The map $f$ is given by $f(z_1,\dots,z_n)=(z_1,\dots,z_m)$.  Fix a time $t$, we define the transformation
\[
\begin{split}
F_t:\quad &U_1:=B^{(m)}(1)\times B^{(n-m)}(e^{-\frac{t}{2}})\to U_0\\
&F_t(w_1,\dots,w_n)=(w_1,\dots,w_m,e^{\frac{t}{2}}w_{m+1},\dots,e^{\frac{t}{2}}w_n),
\end{split}
\]
Then consider on $U_1$ the metrics
$$\om_1(t)=F_t^*\om(t),\qquad \tta \om_1(t)=F_t^*\tta\om(t).$$
We immediately have from $\eqref{local C^0 convergence}$ that
\begin{equation}\label{local C^0 convergence after normal}
\|\om_1(t)-\tta\om_1(t)\|_{\om_1(t)}^2(x_0)=\|\om(t)-\tta\om(t)\|_{\om(t)}^2(x_0)\leq h_1(t).
\end{equation}
Denote on $U_1$
$$\om_1(t)=\sqrt{-1}\sum_{i,j=1}^{n}h(t)_{i\bar{j}}dw_i\wedge d\bar{w}_j,~\tta\om_1(t)=\sqrt{-1}\sum_{i,j=1}^{n}\tta h(t)_{i\bar{j}}dw_i\wedge d\bar{w}_j,$$
then by definition
\[
\begin{split}
&\om_1(t)=F_t^*\left[\sqrt{-1}\sum_{i,j=1}^{n}g(t)_{i\bar{j}}dz_i\wedge d\bar{z}_j\right]\\
=&\sqrt{-1}\sum_{\alpha,\beta=1}^{m}g(t)_{\alpha\bar{\beta}}dw_\alpha\wedge d\bar{w}_\beta+2Re\left[\sqrt{-1}\sum_{\alpha=1}^{m}\sum_{j=m+1}^{n}e^{\frac{t}{2}}g(t)_{\alpha\bar{j}}dw_\alpha\wedge d\bar{w}_j\right]\\
&+\sqrt{-1}\sum_{i,j=m+1}^{n}e^tg(t)_{i\bar{j}}dw_i\wedge d\bar{w}_j,\\
\end{split}
\]
hence we obtain: for $1\leq\alpha,\beta\leq m,m+1\leq i,j\leq n$
\begin{equation}\label{local tran of metric}
\left\{
       \begin{aligned}
       &h(t)_{\alpha\bar{\beta}}=g(t)_{\alpha\bar{\beta}},~ h(t)^{\alpha\bar{\beta}}=g(t)^{\alpha\bar{\beta}},\\
       &h(t)_{\alpha\bar{j}}=e^{\frac{t}{2}}g(t)_{\alpha\bar{j}},~ h(t)^{\alpha\bar{j}}=e^{-\frac{t}{2}}g(t)^{\alpha\bar{j}},\\
       &h(t)_{i\bar{j}}=e^tg(t)_{i\bar{j}},~h(t)^{i\bar{j}}=e^{-t}g(t)^{i\bar{j}}.\\
       \end{aligned}\right.
\end{equation}
Then we apply Lemma $\ref{C^0 estimate}$ to conclude there exists some constant $C=C(K)>0$ such that
\begin{equation}\label{local C^0 estimate after normal}
\sum_{i,j=1}^{n}\left(\left|h(t)_{i\bar{j}}\right|+\left|h(t)^{i\bar{j}}\right|\right)(x_0)\leq C.
\end{equation}
Similarly, we have: for $1\leq \alpha,\beta\leq m,m+1\leq i,j\leq n$
\begin{equation}\label{local tran of bg metric}
\left\{
       \begin{aligned}
       &\tta h(t)_{\alpha\bar{\beta}}=(1-e^{-t})(g_B)_{\alpha\bar{\beta}}+e^{-t}(g_\SRF)_{\alpha\bar{\beta}},\\
       &\tta h(t)_{\alpha\bar{j}}=e^{-\frac{t}{2}}(g_\SRF)_{\alpha\bar{j}},\\
       &\tta h(t)_{i\bar{j}}=(g_\SRF)_{i\bar{j}}.\\
       \end{aligned}
\right.
\end{equation}
Now at $x_0$ we define an $n\times n$ matrix
\begin{gather*}
      A=(a_{i\bar{j}})_{n\times n}=
      \begin{pmatrix}
      (g_B)_{\alpha\bar{\beta}}(x_0)& 0\\ 0 & (g_\SRF)_{i\bar{j}}(x_0)
      \end{pmatrix}_{1\leq\alpha,\beta\leq m,m+1\leq i,j\leq n}.
\end{gather*}
We claim that 
\begin{equation}\label{C^0 close to A}
\|\om_1(t)-A\|_{\om_E}^2(x_0)\leq h_2(t).
\end{equation}
for some $h_2(t)$ depending on the domain, where $\om_E$ is the standard Euclidean metric on $U_1$.  To see this, we use \eqref{local C^0 convergence after normal} and \eqref{local C^0 estimate after normal} to obtain
\begin{equation}\label{detailed local C0}
      \begin{split}
      Ch_1(t)
      &\geq\|\om_1(t)-\tta\om_1(t)\|_{\om_E}^2(x_0)\\
      &=\sum_{i,j=1}^{n}\left|h(t)_{i\bar{j}}-\tta h(t)_{i\bar{j}}\right|^2(x_0).\\
      \end{split}
\end{equation}
But at $x_0$
\[
\begin{split}
\|\om_1(t)-A\|_{\om_E}^2=
&\sum_{\alpha,\beta=1}^{m}\left|h(t)_{\alpha\bar{\beta}}-a_{\alpha\bar{\beta}}\right|^2+2\sum_{\alpha=1}^{m}\sum_{j=m+1}^{n}\left|h(t)_{\alpha\bar{j}}\right|^2\\
&+\sum_{i,j=m+1}^{n}\left|h(t)_{i\bar{j}}-a_{i\bar{j}}\right|^2.\\
\end{split}
\]
Then we can use \eqref{local tran of bg metric} and \eqref{detailed local C0} to estimate three terms on the RHS of the above equality.  Indeed, for $1\leq\alpha,\beta\leq m$, we have at $x_0$
\[
\begin{split}
Ch_1(t)
&\geq \left|h(t)_{\alpha\bar{\beta}}-\tta h(t)_{\alpha\bar{\beta}}\right|^2=\left|h(t)_{\alpha\bar{\beta}}-(1-e^{-t})(g_B)_{\alpha\bar{\beta}}-e^{-t}(g_\SRF)_{\alpha\bar{\beta}}\right|^2\\
&\geq \frac{1}{2}\left|h(t)_{\alpha\bar{\beta}}-a_{\alpha\bar{\beta}}\right|^2-e^{-2t}\left|(g_B)_{\alpha\bar{\beta}}-(g_\SRF)_{\alpha\bar{\beta}}\right|^2,\\
\end{split}
\]
which gives
$$\sum_{\alpha,\beta=1}^{m}\left|h(t)_{\alpha\bar{\beta}}-a_{\alpha\bar{\beta}}\right|^2\leq 2Ch_1(t)+2e^{-2t}\left|(g_B)_{\alpha\bar{\beta}}-(g_\SRF)_{\alpha\bar{\beta}}\right|^2\leq h_3(t),$$
Next, for $1\leq\alpha\leq m,m+1\leq j\leq n$, we have
\[
\begin{split}
Ch_1(t)
&\geq \left|h(t)_{\alpha\bar{j}}-\tta h(t)_{\alpha\bar{j}}\right|^2=\left|h(t)_{\alpha\bar{j}}-e^{-\frac{t}{2}}(g_\SRF)_{\alpha\bar{j}}\right|^2\\
&\geq\frac{1}{2}\left|h(t)_{\alpha\bar{j}}\right|^2-e^{-t}\left|(g_\SRF)_{\alpha\bar{j}}\right|^2,\\
\end{split}
\]
which gives
$$\sum_{\alpha=1}^{m}\sum_{j=m+1}^{n}\left|h(t)_{\alpha\bar{j}}\right|^2\leq 2Ch_1(t)+2e^{-t}\left|(g_\SRF)_{\alpha\bar{j}}\right|^2\leq h_4(t),$$
Finally, for $m+1\leq i,j\leq n$, we have
$$Ch_1(t)\geq \sum_{i,j=m+1}^{n}\left|h(t)_{i\bar{j}}-(g_\SRF)_{i\bar{j}}\right|^2=\sum_{i,j=m+1}^{n}\left|h(t)_{i\bar{j}}-a_{i\bar{j}}\right|^2,$$
Combine the above three estimates we obtain \eqref{C^0 close to A} with
$$h_2(t)=Ch_1(t)+h_3(t)+h_4(t).$$

Now at $x_0$ we can use \eqref{local C^0 estimate after normal} and \eqref{C^0 close to A} to get
\[
\begin{split}
&\left|\det\om_1(t)-\det A\right|\\
&=\left|\sum_{(j_1,\dots,j_n)}(-1)^{\sigma(j_1,\dots,j_n)}(h(t)_{1\bar{j}_1}\cdots h(t)_{n\bar{j}_n}-a_{1\bar{j}_1}\cdots a_{n\bar{j}_n})\right|\\
&\leq \sum_{(j_1,\dots,j_n)}\left|h(t)_{1\bar{j}_1}\cdots h(t)_{(n-1)\bar{j}_{n-1}}\left(h(t)_{n\bar{j}_n}-a_{n\bar{j}_n}\right)\right|+\\
&\quad\dots+\sum_{(j_1,\dots,j_n)}\left|\left(h(t)_{1\bar{j}_1}-a_{1\bar{j}_1}\right)h(t)_{2\bar{j}_2}\cdots h(t)_{n\bar{j}_n}\right|\\
&\leq C\sum_{(j_1,\dots,j_n)}\left(\left|h(t)_{1\bar{j}_1}-a_{1\bar{j}_1}\right|+\cdots+\left|h(t)_{n\bar{j}_n}-a_{n\bar{j}_n}\right|\right)\\
&\leq Ch_2(t)^{\frac{1}{2}}.\\
\end{split}
\]
Hence we obtain
\begin{equation}\label{C^0 close of det}
\left|\det\om_1(t)(x_0)-\det A\right|\leq h_5(t).
\end{equation}
But $\det A\in[A_0,A_1]$ for some positive constants $A_0,A_1$ depending on the domain, independent of $t$ (after $t$ is large), so we can choose a large time $T\geq 1$ such that for all $t\geq T$, we have
\begin{equation}\label{C^0 bound of det}
\det\om_1(t)(x_0)\in \left[\frac{1}{2}A_0,2A_1\right],
\end{equation}
Set $A^{-1}=\left(a^{i\bar{j}}\right)$.  Now we use \eqref{local C^0 estimate after normal}, \eqref{C^0 close to A}, \eqref{C^0 close of det}, \eqref{C^0 bound of det} to estimate at $x_0$
\[
\begin{split}
&\left|h(t)^{1\bar{1}}-a^{1\bar{1}}\right|\\
&=\left|\sum_{(j_2,\cdots,j_n)}\left(\frac{(-1)^{\sigma(j_2,\cdots,j_n)}h(t)_{2\bar{j}_2}\cdots h(t)_{n\bar{j}_n}}{\det\om_1(t)}-\frac{(-1)^{\sigma(j_2,\cdots,j_n)}a_{2\bar{j}_2}\cdots a_{n\bar{j}_n}}{\det A}\right)\right|\\
&\leq \frac{1}{\det\om_1(t)}\sum_{(j_2,\cdots,j_n)}\left|h(t)_{2\bar{j}_2}\cdots h(t)_{n\bar{j}_n}-a_{2\bar{j}_2}\cdots a_{n\bar{j}_n}\right|\\
&\quad+\sum_{(j_2,\cdots,j_n)}\left|a_{2\bar{j}_2}\cdots a_{n\bar{j}_n}\right|\left|\frac{1}{\det\om_1(t)}-\frac{1}{\det A}\right|\\
&\leq\frac{1}{2}Ch_2(t)^{\frac{1}{2}}+C\frac{h_5(t)}{\frac{1}{2}A_0\cdot A_0}\leq h_6(t),\\
\end{split}
\]
Similar argument holds for all $1\leq i,j\leq n$, hence we obtain
\begin{equation}\label{upper C^0 close}
\sum_{i,j=1}^{n}\left|h(t)^{i\bar{j}}(x_0)-a^{i\bar{j}}\right|\leq h_6(t).
\end{equation}
By the special form of the matrix $A$, we have
$$\sum_{\alpha,\beta=1}^{m}a^{{\alpha\bar{\beta}}}a_{{\alpha\bar{\beta}}}=m,\qquad \sum_{i,j,k,l=1}^{m}a^{i\bar{l}}a^{k\bar{j}}a_{i\bar{j}}a_{k\bar{l}}=m,$$
hence we can apply \eqref{local tran of metric}, \eqref{local C^0 estimate after normal} and \eqref{upper C^0 close} to estimate at $x_0$
\[
\begin{split}
\left|T_0-m\right|
&=\left|\sum_{\alpha,\beta=1}^{m}h(t)^{\alpha\bar{\beta}}a_{\alpha\bar{\beta}}-\sum_{\alpha,\beta=1}^{m}a^{\alpha\bar{\beta}}a_{\alpha\bar{\beta}}\right|\\
&\leq\sum_{\alpha,\beta=1}^{m}\left|h(t)^{\alpha\bar{\beta}}-a^{\alpha\bar{\beta}}\right|\cdot\left|a_{\alpha\bar{\beta}}\right|\leq h_7(t),
\end{split}
\]
which gives \eqref{base trace convergence} at $x_0$, and similarly
\[\begin{split}
&\left|\|\om_B\|_{\om(t)}^2-m\right|=\left|\sum_{i,j,k,l=1}^{m}h(t)^{i\bar{l}}h(t)^{k\bar{j}}a_{i\bar{j}}a_{k\bar{l}}-\sum_{i,j,k,l=1}^{m}a^{i\bar{l}}a^{k\bar{j}}a_{i\bar{j}}a_{k\bar{l}}\right|\\
&\leq \sum_{i,j,k,l=1}^{m}\left(\left|h(t)^{i\bar{l}}-a^{i\bar{l}}\right|\left|h(t)^{k\bar{j}}a_{i\bar{j}}a_{k\bar{l}}\right|+\left|h(t)^{i\bar{l}}\right|\left|h(t)^{k\bar{j}}-a^{k\bar{j}}\right|\left|a_{i\bar{j}}a_{k\bar{l}}\right|\right)\\
&\leq h_8(t),\\
\end{split}
\]
which gives \eqref{base norm convergence} at $x_0$.  Since $x_0\in K$ was arbitrary chosen, we obtain \eqref{base trace convergence} and \eqref{base norm convergence}.  

When $S=\emptyset$, the above estimates hold on the whole manifold $X$ with $h(t)$ replaced by $Ce^{-\eta t}$ where $\eta,C>0$ are constants depending on $(X,\om_0)$ which may change from line to line.  This completes the proof.
\end{proof}

\section{The proof of theorem $\ref{main theorem}$}
In this section we prove Theorem $\ref{main theorem}$.  All the operators $\nabla,\Delta,\left\langle,\right\rangle$ are with respect to the evolving metric $\om(t)$.

We first need the following basic lemma to improve our decreasing function $h(t)$.
\begin{lemma}\label{good A(t)}
For any $h(t):[0,\infty)\to (0,\infty)$, a positive decreasing function which tends to zero as $t\to\infty$, there exists a smooth positive decreasing function $A(t):[0,\infty)\to (0,\infty)$ satisfying that: $h(t)\leq A(t),t\geq 0; A(t)\to 0$ as $t\to\infty$; and moreover
\begin{equation}\label{A(t) derivative bound}
0\leq -A'(t)\leq 100A(t),~on~[0,\infty).
\end{equation}
\end{lemma}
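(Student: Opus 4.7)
The plan is to construct $A$ as the solution of a first-order linear ODE driven by a smoothed version of $h$; the Duhamel representation then delivers all four required properties nearly for free.

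\textbf{Step 1 (smooth out $h$).} Extend $h$ to $\mathbb{R}$ by $h(t)=h(0)$ for $t<0$, and fix a nonnegative mollifier $\psi\in C_c^\infty([0,1])$ with $\int\psi=1$. Define
\[
h_1(t)=\int_0^1 h(t-s)\,\psi(s)\,ds.
\]
Because $h$ is decreasing, $h_1(t)\ge h(t)$ for every $t\ge 0$; moreover $h_1$ is smooth, still decreasing, and $h_1(t)\to 0$ as $t\to\infty$. This step is only needed because $h$ is not assumed continuous; without it, the construction below would give $A\in C^0$ but not $C^\infty$.

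\textbf{Step 2 (Duhamel integral).} Set
\[
A(t)=100\int_{-\infty}^t h_1(s)\,e^{-100(t-s)}\,ds
       =100\,e^{-100t}\int_{-\infty}^t h_1(s)\,e^{100s}\,ds,\qquad t\ge 0.
\]
Since $h_1\in C^\infty\cap L^\infty_{\mathrm{loc}}$, $A$ is smooth on $[0,\infty)$ and satisfies the linear ODE
\[
A'(t)=100\bigl(h_1(t)-A(t)\bigr).
\]

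\textbf{Step 3 (verify the four conditions).} Because $h_1$ is decreasing,
\[
A(t)\ge h_1(t)\int_{-\infty}^t 100\,e^{-100(t-s)}\,ds=h_1(t)\ge h(t),
\]
which simultaneously yields the domination $A\ge h$ and $A'(t)=100(h_1(t)-A(t))\le 0$, so $A$ is decreasing. The ODE also gives $A'(t)\ge -100A(t)$, i.e.\ $0\le -A'(t)\le 100A(t)$, which is \eqref{A(t) derivative bound}. Finally, to see $A(t)\to 0$: given $\varepsilon>0$, pick $T$ with $h_1(s)<\varepsilon$ for $s\ge T$; splitting the integral at $T$,
\[
A(t)\le 100\,h_1(0)\int_{-\infty}^T e^{-100(t-s)}\,ds+100\,\varepsilon\int_T^t e^{-100(t-s)}\,ds
      \le h_1(0)\,e^{-100(t-T)}+\varepsilon,
\]
so $\limsup_{t\to\infty}A(t)\le\varepsilon$, and $\varepsilon$ was arbitrary.

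There is no real obstacle beyond the mollification subtlety flagged in Step 1. The key conceptual point is that convolving with the exponential kernel $100\,e^{-100(t-s)}\mathbf{1}_{s\le t}$ converts the qualitative hypothesis ``$h$ is decreasing and tends to $0$'' into the quantitative logarithmic-derivative bound $-A'/A\le 100$ automatically, because the forced solution of $A'+100A=100h_1$ cannot decay faster than the homogeneous rate $e^{-100t}$.
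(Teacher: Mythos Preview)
Your proof is correct and takes a genuinely different route from the paper. The paper builds $A$ by hand: it selects times $\ell_1<\ell_2<\cdots$ with $\ell_{k+1}>\ell_k+2$ and $h(\ell_k)<2^{-k}h(0)$, sets $A\equiv 2^{-k}h(0)$ on $[\ell_k+2,\ell_{k+1}]$, and then smoothly interpolates across each transition interval $[\ell_k,\ell_k+2]$; the length-$2$ gap guarantees the interpolation slope is at most $100\cdot 2^{-k}h(0)\le 100A(t)$. Your construction instead mollifies $h$ to $h_1$ and writes $A$ as the Duhamel solution of $A'+100A=100h_1$, so that the bound $-A'\le 100A$ is built into the equation and the domination $A\ge h_1\ge h$ follows from monotonicity of $h_1$ under the probability kernel $100e^{-100(t-s)}\mathbf{1}_{s\le t}$. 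Your approach is cleaner and more conceptual---once the ODE is written down, every required property drops out in one line---whereas the paper's staircase construction is more elementary (no mollifiers, no improper integrals) and makes the mechanism $|A'|\lesssim (\text{jump})/(\text{gap length})$ transparent. Either way, the lemma is a soft auxiliary fact and both arguments are adequate for the application.
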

\begin{proof}
Choose $0<\ell_1<\ell_2<\cdots$ in the following way:  Let $\ell_1\gg 2$ such that $h(\ell_1)<\frac{1}{2}h(0)$.  Then choose $\ell_2\gg \ell_1+2$ such that $h(\ell_2)<\frac{1}{2^2}h(0)$.  Repeat this process, for each $k\geq 1$, we choose $\ell_{k+1}\gg\ell_k+2$ such that $h(\ell_{k+1})<\frac{1}{2^{k+1}}h(0)$.  First we define
$$A(t)\equiv h(0),~t\in[0,\ell_1];\qquad A(t)\equiv \frac{1}{2^k}h(0),~t\in[\ell_k+2,\ell_{k+1}],k\geq 1.$$
Hence for each $k\geq 1$, we have
\[
\left\{
     \begin{aligned}
     &h(t)\leq h(\ell_k)<\frac{1}{2^k}h(0),~t\in[\ell_k,\infty),\\
     &A(t)\equiv \frac{1}{2^{k-1}}h(0),~t\in[\ell_k-1,\ell_k],\\
     &A(t)\equiv \frac{1}{2^k}h(0),~t\in[\ell_k+2,\ell_k+3],\\
\end{aligned}
\right.
\]
hence we can define $A(t)$ on $[\ell_k,\ell_k+2]$ such that $A(t)$ is smooth and decreasing on $[\ell_k-1,\ell_k+3]$ (and hence smooth and decreasing on $(0,\infty)$) and moreover
$$0\leq -A'(t)\leq 200\cdot\frac{\frac{1}{2^{k-1}}h(0)-\frac{1}{2^k}h(0)}{(\ell_k+2)-\ell_k}=\frac{100}{2^k}h(0)\leq 100A(t),$$
for $t\in[\ell_k,\ell_k+2]$.  Outside such intervals, $A'(t)\equiv 0$, hence \eqref{A(t) derivative bound} is verified.  Also, on each $[\ell_k,\ell_{k+1}]$, we have
$$h(t)\leq\frac{1}{2^k}h(0)\leq A(t),$$
hence $h(t)\leq A(t)$ on $[0,\infty)$.  Finally, $A(t)\to 0$ as $t\to\infty$ is easy to see.  This finishes the proof.
\end{proof}

Next, we need to construct local cutoff function.
\begin{lemma}\label{cutoff function}
Recall that $K=f^{-1}(K'),U=f^{-1}(U'),K'\subset\subset U'\subset\subset B\backslash S'$.  Then there exists a smooth cutoff function $\rho$ with $supp(\rho)\subset U$,$\rho>0$ on $U$, $0\leq\rho\leq 1$, $\rho\equiv 1$ on $K$, satisfying
\begin{equation}\label{cutoff bound}
|\nabla{\rho}|_{\om(t)}^2+|\Delta_{\om(t)}\rho|\leq C,
\end{equation}
on $U\times [0,\infty)$ for some constant $C$ depending on the domain $K$.
\end{lemma}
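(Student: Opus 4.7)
My plan is to take $\rho$ to be the pullback under $f$ of a smooth cutoff function on the base $B$, and then to bound its gradient and Laplacian with respect to $\om(t)$ by combining Lemma \ref{local coordinates}, which supplies local product coordinates adapted to $f$, with the block-wise bounds on $g(t)^{i\bar{j}}$ recorded in Lemma \ref{C^0 estimate}. The key observation is that in product coordinates $\rho$ depends only on the base coordinates $z_1,\dots,z_m$, so its derivatives in the fibre directions vanish; therefore the dangerous exponentially large entries $|g(t)^{\alpha\bar{j}}|\leq Ce^{t/2}$ and $|g(t)^{i\bar{j}}|\leq Ce^{t}$ of \eqref{C^0 upper} never enter the computation, and only the bounded base block $|g(t)^{\alpha\bar{\beta}}|\leq C$ is used.

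Concretely, I would first pick an intermediate open set $V'$ with $K'\subset\subset V'\subset\subset U'\subset\subset B\setminus S'$, and then take a standard smooth cutoff $\tilde{\rho}:B\setminus S'\to[0,1]$ with $\tilde{\rho}\equiv 1$ on $\bar{V'}$ and $supp(\tilde{\rho})\subset U'$; such a $\tilde{\rho}$ exists because $B\setminus S'$ is a smooth complex manifold. Setting $\rho:=f^{*}\tilde{\rho}$ and extending by zero across $S$ yields a smooth function on $X$ with $\rho\equiv 1$ on $K$, $0\leq\rho\leq 1$, and $supp(\rho)\subset U$. Next, since $\bar{U}=f^{-1}(\bar{U'})$ is compact and contained in $X\setminus S$, I would cover it by finitely many product coordinate charts provided by Lemma \ref{local coordinates}. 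On any such chart $f$ has the form $(z_1,\dots,z_n)\mapsto(z_1,\dots,z_m)$, so $\rho(z)=\tilde{\rho}(z_1,\dots,z_m)$, and both $\de_i\rho$ and $\de_i\de_{\bar{j}}\rho$ vanish as soon as $i$ or $j$ exceeds $m$; on base indices these derivatives are uniformly bounded as derivatives of the fixed smooth function $\tilde{\rho}$ on a compact set. Hence in such a chart
\[
|\nabla\rho|_{\om(t)}^{2}=\sum_{\alpha,\beta=1}^{m}g(t)^{\alpha\bar{\beta}}\,\de_{\alpha}\rho\,\de_{\bar{\beta}}\rho,\qquad \Delta_{\om(t)}\rho=\sum_{\alpha,\beta=1}^{m}g(t)^{\alpha\bar{\beta}}\,\de_{\alpha}\de_{\bar{\beta}}\rho,
\]
and the bound $|g(t)^{\alpha\bar{\beta}}|\leq C$ from \eqref{C^0 upper} immediately yields \eqref{cutoff bound}.

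I do not anticipate any genuine obstacle here: the argument is local-to-global in a controlled way because $supp(\rho)$ is compactly contained in $X\setminus S$, and the analytic content has already been carried out in Lemma \ref{C^0 estimate}. The only mildly subtle point is that the product coordinates of Lemma \ref{local coordinates} are chart-dependent, but $|\nabla\rho|_{\om(t)}^{2}$ and $\Delta_{\om(t)}\rho$ are coordinate-invariant, so the pointwise bounds obtained in each chart patch together to give the claimed estimate on $U\times[0,\infty)$. When $S=\emptyset$ exactly the same argument applies with $U=X$, and in that case one does not need to restrict to a subdomain at all.
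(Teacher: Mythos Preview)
Your proposal is correct and follows essentially the same route as the paper: pull back a smooth base cutoff via $f$, observe that in product coordinates $\rho$ depends only on $z_1,\dots,z_m$ so only the base block of $g(t)^{-1}$ enters, and conclude using $|g(t)^{\alpha\bar\beta}|\leq C$. The paper handles the Laplacian bound slightly differently, writing the form inequality $-C\om_B\leq\ddbar\rho\leq C\om_B$ on the base and then invoking $\tr{\om(t)}{\om_B}\leq C$ from Lemma~\ref{basic}, rather than appealing to the entry-wise bound from Lemma~\ref{C^0 estimate}; the two arguments are equivalent here. One cosmetic remark: your $\tilde\rho$ with $supp(\tilde\rho)$ strictly inside $U'$ produces a $\rho$ that is not positive on all of $U$, whereas the lemma (and the paper's construction) asks for $\rho>0$ on $U$ with $\rho$ vanishing on $\partial U$; this does not affect the analysis and the later maximum-principle applications go through either way.
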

\begin{proof}
We first choose cutoff function $\rho_0$ on $B$ such that $supp(\rho_0)\subset U'$, $\rho_0>0$ on $U'$, $0\leq\rho_0\leq 1$, $\rho_0\equiv 1$ on $K'$ and moreover
$$\sqrt{-1}\partial\rho_0\wedge\dbar\rho_0\leq C\om_B,\qquad -C\om_B\leq\ddbar\rho_0\leq C\om_B,$$
on $U'$.  Then we set $\rho=f^*\rho_0$.  Using Lemma \ref{basic} and Lemma \ref{C^0 estimate}, we have under local product coordinates
\[\begin{split}
&|\nabla\rho|_{\om(t)}^2=\sum_{i,j=1}^{m}g(t)^{i\bar{j}}\partial_i\rho\partial_{\bar{j}}\rho\leq C,\\
&|\Delta_{\om(t)}\rho|=|\tr{\om(t)}{\ddbar\rho}|\leq C\tr{\om(t)}{\om_B}\leq C,\\
\end{split}
\]
with some constant $C$ depending on the domain $K$.  This finishes the proof.
\end{proof}

In the following, we always use $h(t),A(t),B(t)$ to denote positive decreasing functions which tend to zero as $t\to\infty$, and moreover $A(t),B(t)$ satisfy condition \eqref{A(t) derivative bound}.

Set $u=\dot{\vp}+\vp-v$ on $X\backslash S$. Recall that $T_0=\tr{\om(t)}{\om_B}$, and
$$\left(\ddt-\Delta\right)u=T_0-m.$$
Then along the flow we have on $X\backslash S$
\begin{equation}\label{evolution of 1st derivative of u}
\left(\ddt-\Delta\right)|\nabla u|^2=|\nabla u|^2-|\nabla\nabla u|^2-|\nabla\bar{\nabla}u|^2+2Re(\nabla T_0\cdot\bar{\nabla}u).
\end{equation}
\begin{equation}\label{evolution of laplacian u}
\left(\ddt-\Delta\right)\Delta u=\Delta u+\left\langle\Ric,\ddbar u\right\rangle+\Delta T_0.
\end{equation}

We first estimate $|\nabla u|^2$.
\begin{proposition}\label{first derivative convergence}
There exists $A(t)$ which depends on the domain $K$ such that
\begin{equation}\label{first derivative convergence2}
|\nabla u|^2\leq A(t),~on~K\times[0,\infty).
\end{equation}
In particular, when $S=\emptyset$, then we have
$$|\nabla u|^2\leq Ce^{-\eta t},~on~X\times[0,\infty).$$
for some constants $\eta,C>0$ depending on $(X,\om_0)$. 
\end{proposition}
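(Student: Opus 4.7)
The plan is to apply the parabolic maximum principle on the compact set $\bar{U}$ (where $U$ is as in Lemma~\ref{cutoff function}, so $K\subset U\subset\subset X\setminus S$) to an appropriate test function. By Lemma~\ref{basic}(2), Proposition~\ref{base convergence}, and Lemma~\ref{good A(t)}, I may assume that $A(t)$ is smooth, positive, decreasing, tending to zero, and satisfying $0\le -A'(t)\le 100A(t)$, with $|u|,|T_0-m|\le A(t)$ on $\bar{U}\times[0,\infty)$. Consider
\[
Q := \rho^4|\nabla u|^2 + C_1 u^2 + C_2 (T_0 - m + A(t))^2,
\]
where $\rho$ is the cutoff from Lemma~\ref{cutoff function} and $C_1,C_2$ are large constants. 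The shift by $A(t)$ in the third term is the crucial trick: since $|T_0-m|\le A(t)$, we have $0\le T_0-m+A(t)\le 2A(t)$ on $\bar{U}$, which will allow the one-sided bound of Lemma~\ref{Schwarz Lemma} to control this term's evolution with an upper bound of order $A(t)$.

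The target inequality is $(\ddt-\Delta)Q\le -|\nabla u|^2 + CA(t)$ on $\bar{U}\times[0,\infty)$. To prove it, I would first combine \eqref{evolution of 1st derivative of u} with the Leibniz-type identity $(\ddt-\Delta)(\rho^4 g) = \rho^4(\ddt-\Delta)g - g\Delta\rho^4 - 2\mathrm{Re}\langle\nabla\rho^4,\bar\nabla g\rangle$ (valid since $\rho$ is time-independent), use $|\nabla\rho^4|\le C\rho^3$ and $|\Delta\rho^4|\le C$ from Lemma~\ref{cutoff function}, and apply Young's inequality to absorb $-2\mathrm{Re}\langle\nabla\rho^4,\bar\nabla|\nabla u|^2\rangle$ into half the negative Hessian, obtaining
\[
(\ddt-\Delta)(\rho^4|\nabla u|^2)\le C|\nabla u|^2 + 2\rho^4\mathrm{Re}(\nabla T_0\cdot\bar\nabla u) - \tfrac{1}{2}\rho^4(|\nabla\nabla u|^2+|\nabla\bar\nabla u|^2).
\]
Next, the Bochner identity $(\ddt-\Delta)f^2 = 2f(\ddt-\Delta)f - 2|\nabla f|^2$ applied with $f=u$ (using $(\ddt-\Delta)u=T_0-m$) yields $(\ddt-\Delta)(C_1 u^2)\le CA(t)^2 - 2C_1|\nabla u|^2$, while the same identity with $f=T_0-m+A(t)$ (using $(\ddt-\Delta)f=(\ddt-\Delta)T_0+A'(t)\le C$, combined with $f\ge 0$, $A'(t)\le 0$, and $f\le 2A(t)$) produces $(\ddt-\Delta)(C_2(T_0-m+A(t))^2)\le CC_2 A(t) - 2C_2|\nabla T_0|^2$. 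The cross term is bounded by $|\nabla u|^2+|\nabla T_0|^2$ via Cauchy--Schwarz and $\rho\le 1$. Choosing $C_2\ge 1$ to absorb $|\nabla T_0|^2$ and then $C_1$ large enough to absorb all residual positive $|\nabla u|^2$ terms gives the target inequality.

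For the conclusion, set $M(t):=\max_{\bar{U}}Q(\cdot,t)$. At a spatial maximum $x_0=x_0(t)$: if $\rho(x_0)=0$ then $M(t)=C_1 u^2(x_0)+C_2(T_0-m+A(t))^2(x_0)\le CA(t)^2$ already; otherwise $|\nabla u|^2(x_0)\ge \rho^4|\nabla u|^2(x_0)\ge M(t)-CA(t)^2$ (using $\rho\le 1$), and the parabolic maximum principle gives $M'(t)\le -M(t)+CA(t)$ almost everywhere. Grönwall then yields $M(t)\le M(0)e^{-t} + Ce^{-t}\int_0^t e^s A(s)\,ds$; splitting the integral at $t/2$ and using that $A$ is decreasing shows both pieces tend to zero. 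Since $\rho\equiv 1$ on $K$, we have $|\nabla u|^2\le Q\le M(t)$ on $K$, yielding the claim for a suitable smoothed decreasing function (via Lemma~\ref{good A(t)}). When $S=\emptyset$, the same argument with $A(t)=Ce^{-\eta t}$ throughout gives the stated exponential rate.

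The hardest part is designing the third term of $Q$. A shift by a constant would give only a constant, not a decaying, upper bound on its $(\ddt-\Delta)$-contribution; no shift at all (i.e.\ using $(T_0-m)^2$) would require a two-sided bound on $(\ddt-\Delta)T_0$, which is not immediate from the available lemmas. The shift by $A(t)$ simultaneously ensures non-negativity of $T_0-m+A(t)$ on $\bar{U}$ (so the one-sided Lemma~\ref{Schwarz Lemma} bound suffices) and smallness of order $A(t)$ (so the evolutionary contribution is $O(A(t))$), which is exactly what is needed to close the Grönwall argument.
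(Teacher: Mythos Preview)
Your proof is correct and follows a genuinely different route from the paper. The paper uses the Song--Tian denominator device: it studies $\frac{|\nabla u|^2}{(A(t)-u)^{1+k}}$ with the special choice $k=-\frac13$, carries out a fairly delicate computation (the identity labelled \eqref{evolution1}) to produce the nonlinear negative term $-\frac{1}{10}\frac{|\nabla u|^4}{(A(t)-u)^{3+k}}$, adds $T_0^2$ to generate $-2|\nabla T_0|^2$, multiplies by $\rho^4$, and applies a one-shot space--time maximum principle; the nonlinear term forces $\frac{|\nabla u|^2}{(A(t)-u)^{2/3}}\le C$ at the maximum, hence $|\nabla u|^2\le CA(t)^{2/3}$ on $K$. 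You instead replace the denominator by the additive term $C_1u^2$, whose Bochner identity supplies $-2C_1|\nabla u|^2$ \emph{without} a $\rho^4$ factor, which is exactly what allows the linear differential inequality $M'(t)\le -M(t)+CA(t)$ and a Gr\"onwall closure. Your shift $(T_0-m+A(t))^2$ in place of the paper's $T_0^2$ is essential precisely for this strategy: it turns the constant error in the paper's \eqref{evolution4} into an $O(A(t))$ error, without which Gr\"onwall would only yield boundedness. Your argument is more elementary (no fractional exponents, no \eqref{evolution1}); the paper's machinery, on the other hand, is reused wholesale in the subsequent $|\Delta u|$ estimate. One small technical point: the dichotomy ``$\rho(x_0)=0$'' versus ``$\rho(x_0)>0$'' does not literally give $M'(t)\le -M(t)+CA(t)$ for a.e.\ $t$, since at boundary maxima you only know $M(t)\le CA(t)^2$, not a differential inequality; but this is harmless, because whenever $M(t)>CA(t)^2$ the maximum is forced to the interior and the inequality does hold, which is enough for the comparison with the ODE solution.
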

\begin{proof}
Apply Lemma \ref{basic} and Proposition \ref{base convergence}, we can find some $h(t)$ depending on $K$ such that $|u|+|T_0-m|\leq h(t)$ on $\bar{U}\times[0,\infty)$.  Then we choose $A(t)$ according to Lemma \ref{good A(t)}, such that $2h(t)\leq A(t),~t\geq 0$.  Hence
\begin{equation}\label{all C^0 bound1}
A(t)-u\in\left[\frac{1}{2}A(t),2A(t)\right],~|T_0-m|\leq A(t),~on~\bar{U}\times[0,\infty).
\end{equation}
So we can compute on $U\times[0,\infty)$ (see \cite{ST3})
\[
\begin{split}
&\left(\ddt-\Delta\right)\left(\frac{|\nabla u|^2}{A(t)-u}\right)\\
=
&\frac{|\nabla u|^2-\left(|\nabla\nabla u|^2+|\nabla\bar{\nabla }u|^2\right)+2Re(\nabla T_0\cdot\bar{\nabla}u)}{A(t)-u}-2\epsilon\frac{Re\left[\nabla|\nabla u|^2\cdot\bar{\nabla}u\right]}{(A(t)-u)^2}\\
&-2\epsilon\frac{|\nabla u|^4}{(A(t)-u)^3}-\frac{2(1-\epsilon)}{A(t)-u}Re\left[\nabla\left(\frac{|\nabla u|^2}{A(t)-u}\right)\cdot\bar{\nabla }u\right]\\
&+(T_0-m)\frac{|\nabla u|^2}{(A(t)-u)^2}-A'(t)\frac{|\nabla u|^2}{(A(t)-u)^2},\\
\end{split}
\]
for any $\epsilon\in\mathbb{R}$.  Now for some $k\in\mathbb{R}$ to be fixed, we can compute
$$ \left(\ddt-\Delta\right)\frac{1}{(A(t)-u)^k}=-\frac{kA'(t)}{(A(t)-u)^{1+k}}-\frac{k(k+1)|\nabla u|^2}{(A(t)-u)^{2+k}}+\frac{k(T_0-m)}{(A(t)-u)^{1+k}},$$
and
\[
\begin{split}
&\frac{2}{(A(t)-u)^{1+k}}Re\left[\nabla\left(\frac{|\nabla u|^2}{A(t)-u}\right)\cdot\bar{\nabla}u\right]\\
&=\frac{2}{A(t)-u}Re\left[\nabla\left(\frac{|\nabla u|^2}{(A(t)-u)^{1+k}}\right)\cdot\bar{\nabla}u\right]-2k\frac{|\nabla u|^4}{(A(t)-u)^{3+k}},\\
\end{split}
\]
and hence on $U\times[0,\infty)$
\begin{equation}\label{evolution1}
\begin{split}
&\left(\ddt-\Delta\right)\left(\frac{|\nabla u|^2}{(A(t)-u)^{1+k}}\right)\\
&=\frac{\left(\ddt-\Delta\right)\left(\frac{|\nabla u|^2}{A(t)-u}\right)}{(A(t)-u)^{k}}+\frac{|\nabla u|^2}{A(t)-u}\left(\ddt-\Delta\right)\frac{1}{(A(t)-u)^k}\\
&-2Re\left[\nabla\left(\frac{|\nabla u|^2}{A(t)-u}\right)\cdot\bar{\nabla}\left(\frac{1}{(A(t)-u)^k}\right)\right]\\
&=\frac{|\nabla u|^2-(|\nabla\nabla u|^2+|\nabla\bar{\nabla}u|^2)+2Re(\nabla T_0\cdot\bar{\nabla}u)}{(A(t)-u)^{1+k}}-2\epsilon\frac{Re\left[\nabla|\nabla u|^2\cdot\bar{\nabla}u\right]}{(A(t)-u)^{2+k}}\\
&-[2\epsilon+k(k+1)-2k(1-\epsilon+k)]\frac{|\nabla u|^4}{(A(t)-u)^{3+k}}+(1+k)\frac{(T_0-m)|\nabla u|^2}{(A(t)-u)^{2+k}}\\
&-\frac{2(1-\epsilon+k)}{A(t)-u}Re\left[\nabla\left(\frac{|\nabla u|^2}{(A(t)-u)^{1+k}}\right)\cdot\bar{\nabla}u\right]-(1+k)\frac{A'(t)|\nabla u|^2}{(A(t)-u)^{2+k}}.\\
\end{split}
\end{equation}
Now we set, in this proof, $k=-\frac{1}{3}$, $\epsilon=\frac{2}{3}$, then
$$1-\epsilon+k=0,~2\epsilon+k(1+k)-2k(1-\epsilon+k)=\frac{10}{9},$$
hence \eqref{evolution1} becomes
\begin{equation}\label{evolution2}
\begin{split}
&\left(\ddt-\Delta\right)\left(\frac{|\nabla u|^2}{(A(t)-u)^{1+k}}\right)\\
=&\frac{|\nabla u|^2-(|\nabla\nabla u|^2+|\nabla\bar{\nabla}u|^2)+2Re(\nabla T_0\cdot\bar{\nabla}u)}{(A(t)-u)^{1+k}}-\frac{4}{3}\cdot\frac{Re[\nabla|\nabla u|^2\cdot\bar{\nabla}u]}{(A(t)-u)^{2+k}}\\
&-\frac{10}{9}\cdot\frac{|\nabla u|^4}{(A(t)-u)^{3+k}}+\frac{2}{3}\cdot\frac{(T_0-m)|\nabla u|^2}{(A(t)-u)^{2+k}}-\frac{2}{3}\cdot\frac{A'(t)|\nabla u|^2}{(A(t)-u)^{2+k}}.\\
\end{split}
\end{equation}
We come to estimate each term.  First, if we choose normal coordinates around a point in $U$, then we have
$$\left|\nabla|\nabla u|^2\cdot\bar{\nabla}u\right|=\left|u_i(u_ju_{\bar{j}})_{\bar{i}}\right|=|u_iu_ju_{\bar{j}\bar{i}}+u_iu_{\bar{j}}u_{j\bar{i}}|\leq |\nabla u|^2\left(|\nabla\nabla u|+|\nabla\bar{\nabla}u|\right),$$
hence
\[
\begin{split}
&\left|\frac{4}{3}\cdot\frac{Re\left[\nabla|\nabla u|^2\cdot\bar{\nabla}u\right]}{(A(t)-u)^{2+k}}\right|\\
&\leq 2\left(\frac{2}{3}\cdot\frac{|\nabla u|^2}{(A(t)-u)^{\frac{3+k}{2}}}\right)\left(\frac{|\nabla\nabla u|}{(A(t)-u)^{\frac{1+k}{2}}}+\frac{|\nabla\bar{\nabla} u|}{(A(t)-u)^{\frac{1+k}{2}}}\right)\\
&\leq \frac{|\nabla\nabla u|^2+|\nabla\bar{\nabla}u|^2}{(A(t)-u)^{1+k}}+\frac{8}{9}\cdot\frac{|\nabla u|^4}{(A(t)-u)^{3+k}},\\
\end{split}
\]
Next, since $k=-\frac{1}{3}$, we have $4+4k=\frac{8}{3}=3+k$, hence
\[
\begin{split}
\frac{\left|2Re(\nabla T_0\cdot\bar{\nabla}u)\right|}{(A(t)-u)^{1+k}}
&\leq \frac{2|\nabla T_0|\cdot|\nabla u|}{(A(t)-u)^{1+k}}\leq |\nabla T_0|^2+\frac{|\nabla u|^2}{(A(t)-u)^{2+2k}}\\
&\leq |\nabla T_0|^2+\frac{1}{100}\cdot\frac{|\nabla u|^4}{(A(t)-u)^{4+4k}}+100\\
&=|\nabla T_0|^2+\frac{1}{100}\cdot\frac{|\nabla u|^4}{(A(t)-u)^{3+k}}+100,\\
\end{split}
\]
The above two terms are the main terms that we need to be careful about, rest three terms are easy to control by using \eqref{A(t) derivative bound} and \eqref{all C^0 bound1} (remember that $k=-\frac{1}{3}$):
\[
\begin{split}
\frac{|\nabla u|^2}{(A(t)-u)^{1+k}}
&\leq \frac{1}{100}\cdot \frac{|\nabla u|^4}{(A(t)-u)^{3+k}}+100(A(t)-u)^{1-k}\\
&\leq \frac{1}{100}\cdot \frac{|\nabla u|^4}{(A(t)-u)^{3+k}}+C,\\
\end{split}
\]
$$\left|\frac{2}{3}\cdot\frac{(T_0-m)|\nabla u|^2}{(A(t)-u)^{2+k}}\right|\leq \frac{A(t)|\nabla u|^2}{\frac{1}{2}A(t)\cdot(A(t)-u)^{1+k}}\leq \frac{1}{100}\cdot \frac{|\nabla u|^4}{(A(t)-u)^{3+k}}+C,$$
$$\left|\frac{2}{3}\cdot\frac{A'(t)|\nabla u|^2}{(A(t)-u)^{2+k}}\right|\leq \frac{100A(t)|\nabla u|^2}{\frac{1}{2}A(t)\cdot(A(t)-u)^{1+k}}\leq \frac{1}{100}\cdot \frac{|\nabla u|^4}{(A(t)-u)^{3+k}}+C,$$
Hence we obtain on $U\times[0,\infty)$
\begin{equation}\label{evolution3}
\left(\ddt-\Delta\right)\left(\frac{|\nabla u|^2}{(A(t)-u)^{1+k}}\right)\leq -\frac{1}{10}\cdot\frac{|\nabla u|^4}{(A(t)-u)^{3+k}}+|\nabla T_0|^2+C.
\end{equation}
Next, from Lemma $\ref{Schwarz Lemma}$ we have on $U\times[0,\infty)$
\begin{equation}\label{evolution4}
\left(\ddt-\Delta\right)T_0^2=2T_0\left(\ddt-\Delta\right)T_0-2|\nabla T_0|^2\leq C-2|\nabla T_0|^2.
\end{equation}
Hence if we set
$$Q=\frac{|\nabla u|^2}{(A(t)-u)^{1+k}}+T_0^2,$$
then we obtain from \eqref{evolution3} and \eqref{evolution4} that on $U\times[0,\infty)$
\begin{equation}\label{evolution5}
\left(\ddt-\Delta\right)Q\leq -\frac{1}{10}\cdot\frac{|\nabla u|^4}{(A(t)-u)^{3+k}}+C.
\end{equation}

Now we choose cutoff function $\rho$ according to Lemma $\ref{cutoff function}$.  Then we can compute
$$\left(\ddt-\Delta\right)\left(\rho^4Q\right)=\rho^4\left(\ddt-\Delta\right)Q-Q\Delta\rho^4-2Re\left[\nabla Q\cdot \bar{\nabla}\rho^4\right],$$
For the second term, we use \eqref{cutoff bound} to estimate on $U\times[0,\infty)$
\[
\begin{split}
-Q\Delta\rho^4
&\leq C\rho^2Q=C\rho^2\frac{|\nabla u|^2}{(A(t)-u)^{1+k}}+C\rho^2T_0^2\\
&\leq \frac{1}{100}\cdot \frac{\rho^4|\nabla u|^4}{(A(t)-u)^{3+k}}+C(A(t)-u)^{1-k}+C\rho^2T_0^2\\
&\leq \frac{1}{100}\cdot \frac{\rho^4|\nabla u|^4}{(A(t)-u)^{3+k}}+C,\\
\end{split}
\]
For the third term, since $\rho>0$ on $U$, we have on $U\times[0,\infty)$
\[
\begin{split}
-2Re\left[\nabla Q\cdot \bar{\nabla}\rho^4\right]
&=-2Re\left[\frac{\nabla(\rho^4Q)-4\rho^3Q\nabla\rho}{\rho^4}\cdot 4\rho^3\bar{\nabla}\rho\right]\\
&=-\frac{8}{\rho}Re\left[\nabla(\rho^4Q)\cdot\bar{\nabla}\rho\right]+32\rho^2|\nabla\rho|^2Q\\
&\leq-\frac{8}{\rho}Re\left[\nabla(\rho^4Q)\cdot\bar{\nabla}\rho\right]+\frac{1}{100}\cdot \frac{\rho^4|\nabla u|^4}{(A(t)-u)^{3+k}}+C,\\
\end{split}
\]
hence combining \eqref{evolution5} we obtain on $U\times[0,\infty)$
\begin{equation}\label{evolution6}
\left(\ddt-\Delta\right)\left(\rho^4Q\right)\leq -\frac{1}{20}\cdot\frac{\rho^4|\nabla u|^4}{(A(t)-u)^{3+k}}-\frac{8}{\rho}Re\left[\nabla(\rho^4Q)\cdot\bar{\nabla}\rho\right]+C.
\end{equation}

Now, assume $\rho^4Q$ achieves its maximum at $(x_0,t_0)$ with $t_0>0$, then $x_0\notin\partial U$, hence $x_0\in U$ and then $\rho(x_0)>0$ and we have
$$-\frac{8}{\rho}Re\left[\nabla(\rho^4Q)\cdot\bar{\nabla}\rho\right](x_0,t_0)=0,$$
Then we apply maximum principle to \eqref{evolution6} to obtain
$$0\leq\left(\ddt-\Delta\right)\left(\rho^4Q\right)(x_0,t_0)\leq -\frac{1}{20}\cdot\frac{\rho^4|\nabla u|^4}{(A(t)-u)^{3+k}}(x_0,t_0)+C,$$
which gives at $(x_0,t_0)$
$$\frac{\rho^4|\nabla u|^2}{(A(t)-u)^{1+k}}\leq \frac{\rho^4|\nabla u|^4}{(A(t)-u)^{3+k}}+\rho^4(A(t)-u)^{1-k}\leq C.$$
But $\rho^4T_0^2\leq C$ on $U\times[0,\infty)$, we conclude that $\rho^4Q\leq C$ on $U\times[0,\infty)$, which gives us that
$$\frac{|\nabla u|^2}{(A(t)-u)^{\frac{2}{3}}}\leq C,~on~K\times[0,\infty),$$
where $C$ is some constant depending on the domain $K$.  Using \eqref{all C^0 bound1} again, we obtain \eqref{first derivative convergence2} with some larger $A(t)$.

Finally, when $S=\emptyset$, the above arguments are still true on $X\times[0,\infty)$ with all $h(t)$ and $A(t)$ replaced by $Ce^{-\eta t}$ with $\eta,C>0$ are constants depending on $(X,\om_0)$ which may change from line to line, since its easy to see that
$$\left|\frac{(Ce^{-\eta t})'}{Ce^{-\eta t}}\right|=\eta\leq 1.$$
(this is the motivation of Lemma \ref{good A(t)}).  This completes the proof.
\end{proof}

Now we come to estimate $|\Delta u|$ locally.  We have the following proposition.
\begin{proposition}\label{Laplacian u convergence}
There exists $A(t)$ depending on the domain $K$ such that
\begin{equation}\label{Laplacian u convergence2}
|\Delta u|\leq A(t),~on~K\times[0,\infty).
\end{equation}
In particular, when $S=\emptyset$, then we have
$$|\Delta u|\leq Ce^{-\eta t},~on~X\times[0,\infty).$$
for some constants $\eta,C>0$ depending on $(X,\om_0)$. 
\end{proposition}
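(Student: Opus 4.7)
The plan is to apply the cutoff-maximum-principle strategy of Proposition 3.3 to an appropriate second-order test quantity, after first converting the evolution of $\Delta u$ into a usable form. As a preliminary step, I would establish the pointwise identity
\begin{equation*}
\ddbar u = -\Ric(\om) - \om_B \qquad \text{on } X \setminus S.
\end{equation*}
This follows from differentiating the Monge-Amp\`ere equation \eqref{scalar MAE} in time: the flow \eqref{nKRF} gives $\ddbar\dot\vp = -\Ric(\om) - \om + e^{-t}(\om_0-\chi)$, and combining with $\ddbar u = \ddbar\dot\vp + (\om - \hat\om) + (\chi - \om_B)$ and $\hat\om - \chi = e^{-t}(\om_0-\chi)$ yields the identity. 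Tracing against $\om(t)$ gives $\Delta u = -R - T_0$, so Lemma \ref{basic}(3) and Proposition \ref{base convergence} already provide the a priori bound $|\Delta u|\leq C$, but not yet the decay. Commuting $\ddt$ with $\Delta$ via $\ddt g_{i\bar j} = -R_{i\bar j}-g_{i\bar j}$ applied to \eqref{evolution of u} gives
\begin{equation*}
\left(\ddt-\Delta\right)\Delta u = \langle \Ric,\ddbar u\rangle + \Delta u + \Delta T_0,
\end{equation*}
and the identity rewrites $\langle \Ric,\ddbar u\rangle = -|\ddbar u|^2 - \langle\om_B,\ddbar u\rangle$, replacing $\Ric$ by quantities we already know how to control.

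I would then introduce a composite test quantity of the form
\begin{equation*}
Q = \frac{(\Delta u)^2}{(A(t)-u)^{1+k}} + C_1\,\frac{|\nabla u|^2}{(A(t)-u)^{1+k'}} + C_2\, T_0^2,
\end{equation*}
for exponents $k,k'$ (likely slightly negative, as with $k=-1/3$ in Proposition \ref{first derivative convergence}) and large constants $C_1,C_2>0$. The first summand is the target; the second, via the Bochner formula \eqref{evolution of 1st derivative of u}, supplies a negative $-|\nabla\nabla u|^2-|\nabla\bar\nabla u|^2$ term that majorizes $-c|\ddbar u|^2$; the third, via Lemma \ref{Schwarz Lemma}, supplies $-2|\nabla T_0|^2$, absorbing the $\Delta T_0$ contribution. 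Computing $(\ddt-\Delta)Q$ using these evolutions, the chain-rule manipulation for $(\ddt-\Delta)[f/(A-u)^{1+k}]$ developed in Proposition \ref{first derivative convergence}, and $-A'(t)\leq 100 A(t)$ from Lemma \ref{good A(t)}, the goal is a differential inequality
\begin{equation*}
\left(\ddt-\Delta\right) Q \leq -c\,\frac{(\Delta u)^4}{(A(t)-u)^{3+k}} + C.
\end{equation*}
Multiplying by the cutoff $\rho^4$ from Lemma \ref{cutoff function} and evaluating at an interior space-time maximum of $\rho^4 Q$ (where the $-\tfrac{8}{\rho}\Re[\nabla(\rho^4 Q)\cdot\bar\nabla\rho]$ term vanishes) forces $\rho^4 Q \leq C$. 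Restricted to $K$ this yields $(\Delta u)^2 \leq C(A(t)-u)^{1+k} \leq CA(t)^{1+k}$, and replacing $A^{(1+k)/2}$ by a possibly slower-decaying function via Lemma \ref{good A(t)} gives \eqref{Laplacian u convergence2}. When $S=\emptyset$, the argument runs on all of $X$ with $\rho\equiv 1$ and $A(t)=Ce^{-\eta t}$, using Lemma \ref{basic}(5).

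The main obstacle is controlling the term $-2\Delta u\cdot|\ddbar u|^2/(A-u)^{1+k}$ coming from the evolution of $(\Delta u)^2$: since $\Delta u$ has no pointwise sign, this cubic-in-$\Delta u$ contribution can have the wrong sign for the maximum principle, and although $|\ddbar u|^2\geq(\Delta u)^2/n$ is automatic, there is no a priori upper bound on $|\ddbar u|^2$ independent of $|\Ric|^2$. The plan is to use the $-|\nabla\nabla u|^2-|\nabla\bar\nabla u|^2$ contribution of the $C_1|\nabla u|^2/(A-u)^{1+k'}$ summand to dominate $|\ddbar u|^2$, and then to trade the resulting cubic $|\Delta u|^3$ for a fraction of the quartic $-c(\Delta u)^4/(A-u)^{3+k}$ via the Cauchy-Schwarz inequality $|\Delta u|^3 \leq \varepsilon(\Delta u)^4/(A-u)^{\sigma} + C_\varepsilon (A-u)^{\sigma}$, exploiting the smallness of $A-u$. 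A secondary technical issue is the uncontrolled sign of $2\Delta u\cdot \Delta T_0$, which must be absorbed by $-2C_2|\nabla T_0|^2$ together with the hidden integration-by-parts inside the $(\ddt-\Delta)$ computation on $T_0^2$. Tuning the five constants $k,k',C_1,C_2,\varepsilon$ so that all these absorptions coexist is the core technical task.
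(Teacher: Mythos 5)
Your preliminary reduction is exactly the paper's: you derive $\Ric(\om) = -\ddbar u - \om_B$, read off $R = -T_0 - \Delta u$, and propose the same cutoff-plus-maximum-principle machinery with a composite quantity of the shape (target)$ + C_1\,|\nabla u|^2/(A-u)^{\cdot} + C_2\,T_0^2$, exploiting the negative $-(|\nabla\nabla u|^2 + |\nabla\bar\nabla u|^2)$ and $-|\nabla T_0|^2$ supplied by the auxiliary pieces. But you take the target summand to be \emph{quadratic} in $\Delta u$, namely $(\Delta u)^2/(A-u)^{1+k}$, whereas the paper works with the \emph{linear} quantity
\[
K=\frac{-\Delta u - (T_0-m)}{A(t)-u}
\]
and runs the argument twice, once with $Q_+ = K + 100H + T_0^2$ and once with $Q_- = -K + 100H + T_0^2$, to get the two one-sided bounds $\mp\Delta u \le C(A(t)-u)$ separately. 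This is not a cosmetic difference: it is precisely what dispenses with the obstruction you flag in your last paragraph. In the linear version, the identity $\langle \Ric,\ddbar u\rangle = -|\nabla\bar\nabla u|^2 - \langle \om_B,\ddbar u\rangle$ produces a term $\pm|\nabla\bar\nabla u|^2/(A-u)$ with a \emph{definite} sign and no accompanying factor of $\Delta u$, and since $A(t)-u \le 2A(t)\le 2$ it is dominated outright by the good term $-100|\nabla\bar\nabla u|^2/(A-u)^2$ from $100H$. No cubic in $\Delta u$ ever appears.

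The gap in your version is concrete: the quartic good term $-c\,(\Delta u)^4/(A-u)^{3+k}$ you want does not arise from the evolution of $(\Delta u)^2/(A-u)^{1+k}$. Tracking the chain-rule terms as in \eqref{evolution1}, the combination $\nabla\bigl(1/(A-u)^{1+k}\bigr)\cdot\bar\nabla\bigl[(\Delta u)^2\bigr]$ and $(\ddt-\Delta)(A-u)^{-k}$ produce the \emph{mixed} term $(\Delta u)^2\,|\nabla u|^2/(A-u)^{3+k}$, not $(\Delta u)^4/(A-u)^{3+k}$: the quartic in $|\nabla u|$ in \eqref{evolution1} appears because $\nabla u$ sits both in the numerator and in $\nabla(A-u)$, a coincidence that does not repeat when the numerator is $(\Delta u)^2$. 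The mixed term is not usable as a good term for your purpose; it could simply be tiny at a point where $|\nabla u|$ is small while $|\Delta u|$ is large, so it cannot force a bound on $(\Delta u)^2/(A-u)^{1+k}$ at a maximum. Treated as a bad term, it gives $(\Delta u)^2\,|\nabla u|^2/(A-u)^{3+k}\le C(\Delta u)^2/(A-u)^{1+k}$ after using $|\nabla u|^2/(A-u)^2 \le C$, i.e.\ a multiple of the very quantity you want to bound; and one cannot tune $k$ out of this (taking $1+k\le 0$ kills the decay). Your separate worry about the cubic $-2\Delta u\,|\nabla\bar\nabla u|^2/(A-u)^{1+k}$ is actually curable by the absorption you suggest, since $|\Delta u|\le C$ a priori by Lemma \ref{basic}(3) and Proposition \ref{base convergence}; it is the missing quartic, not the cubic, that sinks the plan. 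Switching to the linear test quantity as in the paper (split into $Q_\pm$) is the fix.
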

\begin{proof}
Applying Lemma \ref{basic}, Proposition $\ref{base convergence}$ and Proposition $\ref{first derivative convergence}$, we can find $h(t)$ such that
$$|T_0-m|+\left|\|\om_B\|_{\om(t)}^2-m\right|+|u|+|\nabla u|^2\leq \frac{1}{2}h(t),~on~\bar{U}\times[0,\infty),$$
Then we apply Lemma \ref{good A(t)} to find some $B(t)$ which then depends on the domain $K$ such that $h(t)\leq B(t)$ for $t\geq 0$ and $B(t)$ satisfies  \eqref{A(t) derivative bound}.  WLOG, we may assume $B(t)\leq 1$ for $t\geq 0$, since otherwise we can consider $t\in[T,\infty)$ for some $T$ large.  Now we set $A(t)=B(t)^{\frac{1}{2}}\geq B(t)$, then we have $A(t)\leq 1$ for $t\geq 0$ and moreover
\begin{equation}\label{all C^0 bound2}
\left\{
\begin{aligned}
      &|T_0-m|+\left|\|\om_B\|_{\om(t)}^2-m\right|+|u|\leq\frac{1}{2}A(t),\\
      &|\nabla u|^2\leq A(t)^2,\\
\end{aligned}
\right.
\end{equation}
on $\bar{U}\times[0,\infty)$.  Still, we have
$$0\leq -A'(t)=-\frac{1}{2}\cdot\frac{B'(t)}{B(t)^{\frac{1}{2}}}\leq \frac{1}{2}\cdot\frac{100B(t)}{B(t)^{\frac{1}{2}}}\leq 100B(t)^{\frac{1}{2}}=100A(t),$$
which means that $A(t)$ still satisfies \eqref{A(t) derivative bound}.

Now we  use \eqref{evolution of laplacian u} to compute
\begin{equation}\label{evolution7}
\begin{split}
&\left(\ddt-\Delta\right)\left(\frac{-\Delta u}{A(t)-u}\right)\\
=&\frac{-\Delta u+|\nabla\bar{\nabla}u|^2+\left\langle\ddbar u,\om_B\right\rangle-\Delta T_0}{A(t)-u}-\frac{(T_0-m)\Delta u}{(A(t)-u)^2}\\
&-\frac{2}{A(t)-u}Re\left[\nabla\left(\frac{-\Delta u}{A(t)-u}\right)\cdot\bar{\nabla}u\right]+\frac{A'(t)\Delta u}{(A(t)-u)^2},\\
\end{split}
\end{equation}
which is always meaningful on $U\times[0,\infty)$ thanks to \eqref{all C^0 bound2}, where we have used the fact that on $X\backslash S\times[0,\infty)$
\begin{equation}\label{relation of Ric and u}
\begin{split}
\Ric(\om)
&=-\ddbar (\dot{\vp}+\vp)-\chi\\
&=-\ddbar (\dot{\vp}+\vp-v)-(\chi+\ddbar v)\\
&=-\ddbar u-\om_B.\\
\end{split}
\end{equation}
Also, we have
\[
\begin{split}
\left(\ddt-\Delta\right)\left(\frac{T_0-m}{A(t)-u}\right)
&=\frac{\ddt T_0-\Delta T_0}{A(t)-u}-\frac{A'(t)(T_0-m)}{(A(t)-u)^2}\\
&+\frac{(T_0-m)^2}{(A(t)-u)^2}-\frac{2}{A(t)-u}Re\left[\nabla\left(\frac{T_0-m}{A(t)-u}\right)\cdot\bar{\nabla}u\right].\\
\end{split}
\]
Hence if we set
$$K=\frac{-\Delta u-(T_0-m)}{A(t)-u},$$
then we have
\[
\begin{split}
&\left(\ddt-\Delta\right)K\\
=&\frac{-\Delta u+|\nabla\bar{\nabla}u|^2+\left\langle\ddbar u,\om_B\right\rangle}{A(t)-u}-\frac{2}{A(t)-u}Re\left[\nabla K\cdot\bar{\nabla}u\right]\\
&-\frac{(T_0-m)\Delta u}{(A(t)-u)^2}+\frac{A'(t)\Delta u}{(A(t)-u)^2}-\frac{\ddt T_0}{A(t)-u}+\frac{A'(t)(T_0-m)}{(A(t)-u)^2}-\frac{(T_0-m)^2}{(A(t)-u)^2},\\
\end{split}
\]
But using \eqref{relation of Ric and u} we have on $X\backslash S$
\[
\begin{split}
\ddt T_0
&=\ddt\left(g(t)^{i\bar{j}}(g_B)_{i\bar{j}}\right)=\left(R^{\bar{j}i}+g(t)^{i\bar{j}}\right)(g_B)_{i\bar{j}}\\
&=\left\langle\Ric,\om_B\right\rangle+T_0=-\left\langle\ddbar u,\om_B\right\rangle+T_0-\|\om_B\|_{\om(t)}^2,\\
\end{split}
\]
hence we obtain
\begin{equation}\label{evolution8}
\begin{split}
&\left(\ddt-\Delta\right)K\\=
&\frac{-\Delta u+|\nabla\bar{\nabla}u|^2+2\left\langle\ddbar u,\om_B\right\rangle-T_0+\|\om_B\|_{\om(t)}^2}{A(t)-u}\\
&-\frac{2}{A(t)-u}Re\left[\nabla K\cdot\bar{\nabla}u\right]-\frac{(T_0-m)\Delta u}{(A(t)-u)^2}+\frac{A'(t)\Delta u}{(A(t)-u)^2}\\
&+\frac{A'(t)(T_0-m)}{(A(t)-u)^2}-\frac{(T_0-m)^2}{(A(t)-u)^2},\\
\end{split}
\end{equation}
Next, set $\epsilon=k=1$ in \eqref{evolution1}, we have (note that $A(t)$ is changed here)
\begin{equation}\label{evolution9}
\begin{split}
&\left(\ddt-\Delta\right)\left(\frac{|\nabla u|^2}{(A(t)-u)^{2}}\right)\\
=&\frac{|\nabla u|^2-(|\nabla\nabla u|^2+|\nabla\bar{\nabla}u|^2)+2Re(\nabla T_0\cdot\bar{\nabla}u)}{(A(t)-u)^{2}}\\
&-2\frac{Re\left[\nabla|\nabla u|^2\cdot\bar{\nabla}u\right]}{(A(t)-u)^{3}}-2\frac{|\nabla u|^4}{(A(t)-u)^{4}}+\frac{2(T_0-m)|\nabla u|^2}{(A(t)-u)^{3}}\\
&-\frac{2A'(t)|\nabla u|^2}{(A(t)-u)^{3}}-\frac{2}{A(t)-u}Re\left[\nabla\left(\frac{|\nabla u|^2}{(A(t)-u)^{2}}\right)\cdot\bar{\nabla}u\right].\\
\end{split}
\end{equation}
Now set $H=\frac{|\nabla u|^2}{(A(t)-u)^{2}}$ and then set
$$Q_{\pm}=\pm K+100H+T_0^2,$$
then combining \eqref{evolution8} and \eqref{evolution9} we obtain that on $U\times[0,\infty)$
\begin{equation}\label{evolution10}
\begin{split}
&\left(\ddt-\Delta\right)Q_\pm\\=
&\pm\left\{\frac{-\Delta u+|\nabla\bar{\nabla}u|^2+2\left\langle\ddbar u,\om_B\right\rangle-T_0+\|\om_B\|_{\om(t)}^2}{A(t)-u}\right.\\
&\left.-\frac{(T_0-m)\Delta u}{(A(t)-u)^2}+\frac{A'(t)\Delta u}{(A(t)-u)^2}+\frac{A'(t)(T_0-m)}{(A(t)-u)^2}-\frac{(T_0-m)^2}{(A(t)-u)^2}\right\}\\
&+100\left\{\frac{|\nabla u|^2-(|\nabla\nabla u|^2+|\nabla\bar{\nabla}u|^2)+2Re(\nabla T_0\cdot\bar{\nabla}u)}{(A(t)-u)^{2}}\right.\\
&\left.-\frac{2Re\left[\nabla|\nabla u|^2\cdot\bar{\nabla}u\right]}{(A(t)-u)^{3}}-\frac{2|\nabla u|^4}{(A(t)-u)^{4}}+\frac{2(T_0-m)|\nabla u|^2}{(A(t)-u)^{3}}-\frac{2A'(t)|\nabla u|^2}{(A(t)-u)^{3}}\right\}\\
&-\frac{2}{A(t)-u}Re\left[\nabla Q_\pm\cdot\bar{\nabla}u\right]+\frac{4T_0}{A(t)-u}Re(\nabla T_0\cdot\bar{\nabla}u)\\
&+2T_0\left(\ddt-\Delta\right)T_0-2|\nabla T_0|^2.\\
\end{split}
\end{equation}
With the help of \eqref{all C^0 bound2}, we only need two ``good terms''
$$-100\frac{|\nabla\nabla u|^2+|\nabla\bar{\nabla}u|^2}{(A(t)-u)^2},\qquad -2|\nabla T_0|^2,$$
to control all other terms except the term which involves $\nabla Q_\pm$.  Indeed, we have on $U\times[0,\infty)$
$$\left|\frac{\Delta u}{A(t)-u}\right|\leq \frac{C|\nabla\bar{\nabla}u|}{A(t)-u}\leq \frac{|\nabla\bar{\nabla}u|^2}{(A(t)-u)^2}+C;$$
$$\frac{|\nabla\bar{\nabla}u|^2}{(A(t)-u)}\leq \frac{2A(t)|\nabla\bar{\nabla}u|^2}{(A(t)-u)^2}\leq 2\frac{|\nabla\bar{\nabla}u|^2}{(A(t)-u)^2};$$
$$\left|\frac{2\left\langle\ddbar u,\om_B\right\rangle}{A(t)-u}\right|\leq \frac{2|\nabla\bar{\nabla}u|\cdot\|\om_B\|_{\om(t)}}{A(t)-u}\leq \frac{|\nabla\bar{\nabla}u|^2}{(A(t)-u)^2}+C;$$
$$\left|\frac{T_0-\|\om_B\|_{\om(t)}^2}{A(t)-u}\right|\leq \frac{|T_0-m|+\left|\|\om_B\|_{\om(t)}^2-m\right|}{A(t)-u}\leq \frac{\frac{1}{2}A(t)+\frac{1}{2}A(t)}{\frac{1}{2}A(t)}=2;$$
$$\left|\frac{(T_0-m)\Delta u}{(A(t)-u)^2}\right|\leq \frac{A(t)\cdot C|\nabla\bar{\nabla}u|}{(A(t)-u)^2}\leq \frac{|\nabla\bar{\nabla}u|^2}{(A(t)-u)^2}+C;$$
$$\left|\frac{A'(t)\Delta u}{(A(t)-u)^2}\right|\leq \frac{100A(t)\cdot C|\nabla\bar{\nabla}u|}{(A(t)-u)^2}\leq \frac{|\nabla\bar{\nabla}u|^2}{(A(t)-u)^2}+C;$$
$$\left|\frac{A'(t)(T_0-m)}{(A(t)-u)^2}-\frac{(T_0-m)^2}{(A(t)-u)^2}\right|\leq C;$$
$$100\frac{|\nabla u|^2}{(A(t)-u)^2}\leq 100\frac{A(t)^2}{\frac{1}{4}A(t)^2}\leq C;$$
$$\left|\frac{2Re(\nabla T_0\cdot\bar{\nabla}u)}{A(t)-u}\right|\leq \frac{2|\nabla T_0|\cdot A(t)}{\frac{1}{2}A(t)}\leq |\nabla T_0|^2+C;$$
and moreover
\[
\begin{split}
&100\left\{-\frac{2Re\left[\nabla|\nabla u|^2\cdot\bar{\nabla}u\right]}{(A(t)-u)^{3}}+\frac{2(T_0-m)|\nabla u|^2}{(A(t)-u)^{3}}-\frac{2A'(t)|\nabla u|^2}{(A(t)-u)^{3}}\right\}\\
&\leq C\left\{\frac{A(t)^2\left(|\nabla\nabla u|+|\nabla\bar{\nabla}u|\right)}{\frac{1}{4}A(t)^2\cdot(A(t)-u)}+\frac{A(t)^3}{A(t)^3}\right\}\\
&\leq \frac{|\nabla\nabla u|^2+|\nabla\bar{\nabla}u|^2}{(A(t)-u)^2}+C;\\
\end{split}
\]
$$\frac{4T_0}{A(t)-u}Re(\nabla T_0\cdot\bar{\nabla}u)+2T_0\left(\ddt-\Delta\right)T_0\leq |\nabla T_0|^2+C;$$
Hence we conclude that on $U\times[0,\infty)$
$$\left(\ddt-\Delta\right)Q_\pm\leq -5\frac{|\nabla\bar{\nabla}u|^2}{(A(t)-u)^2}-\frac{2}{A(t)-u}Re\left[\nabla Q_\pm\cdot\bar{\nabla}u\right]+C.$$

Now as before, we choose cutoff function $\rho$ according to Lemma \ref{cutoff function}, then we have that on $U\times[0,\infty)$
\[
\begin{split}
&\left(\ddt-\Delta\right)(\rho^4Q_\pm)\\
&\leq -5\frac{\rho^4|\nabla\bar{\nabla}u|^2}{(A(t)-u)^2}-\frac{2\rho^4Re\left[\nabla Q_\pm\cdot\bar{\nabla}u\right]}{A(t)-u}+C-Q_\pm\Delta \rho^4-2Re\left[\nabla Q_\pm\cdot\bar{\nabla}\rho^4\right],\\
\end{split}
\]
For the forth term, we have
\[
\begin{split}
-Q_\pm\Delta \rho^4
&\leq C\rho^2|Q_\pm|\\
&\leq C\rho^2\left\{\frac{|\nabla\bar{\nabla}u|}{A(t)-u}+\frac{|T_0-m|}{A(t)-u}+100\frac{|\nabla u|^2}{(A(t)-u)^2}+T_0^2\right\}\\
&\leq \frac{\rho^4|\nabla\bar{\nabla}u|^2}{(A(t)-u)^2}+C;\\
\end{split}
\]
for the second term, we have
\[
\begin{split}
&-\frac{2\rho^4Re\left[\nabla Q_\pm\cdot\bar{\nabla}u\right]}{A(t)-u}\\
&=-\frac{2}{A(t)-u}Re\left[\nabla(\rho^4Q_\pm)\cdot\bar{\nabla}u\right]+\frac{8\rho^3Q_\pm}{A(t)-u}Re\left[\nabla\rho\cdot\bar{\nabla}u\right]\\
&\leq -\frac{2}{A(t)-u}Re\left[\nabla(\rho^4Q_\pm)\cdot\bar{\nabla}u\right]+\frac{C\rho^3|Q_\pm|A(t)^2}{A(t)}\\
&\leq -\frac{2}{A(t)-u}Re\left[\nabla(\rho^4Q_\pm)\cdot\bar{\nabla}u\right]+\frac{\rho^4|\nabla\bar{\nabla}u|^2}{(A(t)-u)^2}+C;\\
\end{split}
\]
and similarly for the last term
$$-2Re\left[\nabla Q_\pm\cdot\bar{\nabla}\rho^4\right]\leq -\frac{8}{\rho}Re\left[\nabla(\rho^4Q_\pm)\cdot\bar{\nabla}\rho\right]+\frac{\rho^4|\nabla\bar{\nabla}u|^2}{(A(t)-u)^2}+C.$$
Hence we finally conclude on $U\times[0,\infty)$
\begin{equation}\label{evolution11}
\begin{split}
&\left(\ddt-\Delta\right)(\rho^4Q_\pm)\\
&\leq -\frac{\rho^4|\nabla\bar{\nabla}u|^2}{(A(t)-u)^2}-\frac{2Re\left[\nabla(\rho^4Q_\pm)\cdot\bar{\nabla}u\right]}{A(t)-u}-\frac{8}{\rho}Re\left[\nabla(\rho^4Q_\pm)\cdot\bar{\nabla}\rho\right]+C.\\
\end{split}
\end{equation}
Now we assume $\rho^4Q_+$ achieves its maximum at $(x_0,t_0)$ with $t_0>0$, then if $x_0\in\partial U$, we are done.  Hence we can assume that $x_0\in U$ and then $\rho(x_0)>0$ and hence
$$-\frac{2Re\left[\nabla(\rho^4Q_+)\cdot\bar{\nabla}u\right]}{A(t)-u}(x_0,t_0)-\frac{8}{\rho}Re\left[\nabla(\rho^4Q_+)\cdot\bar{\nabla}\rho\right](x_0,t_0)=0,$$
then we obtain from maximum principle and \eqref{evolution11} that
$$0\leq\left(\ddt-\Delta\right)(\rho^4Q_+)(x_0,t_0)\leq -\frac{\rho^4|\nabla\bar{\nabla}u|^2}{(A(t)-u)^2}(x_0,t_0)+C,$$
and hence on $U\times[0,\infty)$
\[
\begin{split}
\rho^4Q_+
&\leq \rho^4Q_+(x_0,t_0)\\
&=\rho^4\left\{-\frac{\Delta u+(T_0-m)}{A(t)-u}+100\frac{|\nabla u|^2}{(A(t)-u)^2}+T_0^2\right\}(x_0,t_0)\\
&\leq \frac{\rho^4|\nabla\bar{\nabla}u|}{A(t)-u}(x_0,t_0)+C\leq C,\\
\end{split}
\]
which gives
$$\frac{-\Delta u}{A(t)-u}\leq C,~on~K\times[0,\infty).$$
Similarly, consider $Q_-$ instead gives us
$$\frac{\Delta u}{A(t)-u}\leq C,~on~K\times[0,\infty).$$
and hence we conclude
$$|\Delta u|\leq A(t),~on~K\times[0,\infty),$$
for some larger $A(t)$.  Hence we obtain \eqref{Laplacian u convergence2}.

Finally, when $S=\emptyset$, the above arguments are still true on $X\times[0,\infty)$ with all $h(t)$ and $A(t)$ replaced by $Ce^{-\eta t}$ with $\eta,C>0$ are constants depending on $(X,\om_0)$ which may change from line to line.  This completes the proof.
\end{proof}

Now we can prove Theorem $\ref{main theorem}$.
\begin{proof}[Proof of Theorem $\ref{main theorem}$]
From \eqref{relation of Ric and u}, we have that on $X\backslash S\times[0,\infty)$
$$R=-T_0-\Delta u,$$
hence Proposition \ref{base convergence} and Proposition \ref{Laplacian u convergence} give us some $h(t)$ depending on the domain $K$ such that
$$|R+m|\leq |T_0-m|+|\Delta u|\leq h(t),~on~K\times[0,\infty).$$
In particular, if $S=\emptyset$, then
$$|R+m|\leq |T_0-m|+|\Delta u|\leq Ce^{-\eta t},~on~X\times[0,\infty),$$
where $\eta,C>0$ are constants depending on $(X,\om_0)$.  This completes the proof.
\end{proof}

\end{document}